\newtheorem*{maintheorem}{Theorem 1.1}
\newtheorem*{Theorem}{Theorem}
\newtheorem{theorem}{Theorem}[section]
\newtheorem{corollary}[theorem]{Corollary}
\newtheorem{lemma}[theorem]{Lemma}
\newtheorem{proposition}[theorem]{Proposition}
\theoremstyle{remark}
\newtheorem{remark}[theorem]{Remark}
\newcommand{\ud}{\mathrm{d}}
\newcommand{\abs}[1]{\left|#1\right|}
\renewcommand{\Re}{\textup{Re }} 
\newcommand{\floor}[1]{\left\lfloor#1\right\rfloor}
\newcommand{\ceil}[1]{\left\lceil#1\right\rceil}
\newcommand{\printlineno}{\hspace*{-.5in}\texttt{SOURCE LINE \#\the\inputlineno}}
\renewcommand{\printlineno}{}
\newcommand{\odd}{\textrm{ odd}}
\renewcommand{\mod}[1]{{\ifmmode\text{\rm\ (mod~$#1$)}\else\discretionary{}{}{\hbox{ }}\rm(mod~$#1$)\fi}}
\newcommand{\sumstar}[1]{\sideset{}{^*}\sum_{#1}}
\title{The distribution of the maximum of character sums}
\author{Jonathan W. Bober}
\address{Department of Mathematics, University of Washington, Seattle, WA, USA}
\email{jwbober@math.washington.edu}
\date{}
\thanks{MSC: 11L40}
\author{Leo Goldmakher}
\address{Department of Mathematics, University of Toronto, Toronto, ON, Canada}
\email{leo.goldmakher@utoronto.ca}
\begin{document}

\begin{abstract}
We obtain explicit bounds on the moments of character sums, refining estimates of
Montgomery and Vaughan. As an application we obtain results on the distribution
of the maximal magnitude of character sums normalized by the square root of the
modulus, finding almost double exponential decay in the tail of this distribution.

\end{abstract}

\maketitle

\section{Introduction}

Given $\chi\mod{q}$ a Dirichlet character, we consider character sums of the form
\[
S_\chi(t) = \sum_{n \leq t} \chi(n) .
\]
Character sums play a fundamental role in many number theoretic problems, and have been the object of intense
study over the past century. Despite much progress, our understanding of the behavior of $S_\chi(t)$ is still
far from what is expected to be true. For example, a widespread belief (and a corollary of the generalized
Riemann hypothesis) is that for all nonprincipal $\chi\mod{q}$,
\[
S_\chi(t) \ll_\epsilon t^{1/2} q^\epsilon ;
\]
however, this is only known to hold when $t \gg q^{1 - \epsilon}$. Even worse, when $t \leq q^{1/4 - \epsilon}$, no nontrivial estimates are known outside of special cases (see \cite{graham-ringrose,iwaniec}).

The first nontrivial upper bound was obtained independently in 1918 by P\'{o}lya and Vinogradov, who proved that for any nonprincipal $\chi \mod{q}$,
\begin{equation}
\label{Eq:PolyaVinogradovInequality}
M(\chi) := \max_t |S_\chi(t)|  \ll \sqrt{q} \log q .
\end{equation}
This bound is close to being sharp: for any primitive character $\chi \mod{q}$ we can apply partial summation to the Gauss sum
\[
\tau(\chi) := \sum_{n \leq q} \chi(n) e(n/q)
\]
and use the well known relation $|\tau(\chi)| = \sqrt{q}$ to deduce that
\begin{equation}
\label{Eq:LowerBoundOnM}
M(\chi) \gg \sqrt{q} .
\end{equation}
Thus, $\sqrt{q} \ll M(\chi) \ll \sqrt{q} \log q$. Assuming the GRH,
Montgomery and Vaughan \cite{montgomery-vaughan-upper-bound} improved the upper bound to $M(\chi) \ll \sqrt{q}\log \log q$, and it is known that
$M(\chi)$ gets this large for many characters (see \cite{granville-sound-pret, paley}).
Unconditionally, the upper bound has been improved only for some special families of characters
(see \cite{goldmakher-mult-mimic, goldmakher-smooth-character-sums}); however, a theorem of Montgomery and Vaughan
\cite{montgomery-vaughan-mean-values}
asserts that for many characters, $M(\chi) \asymp \sqrt{q}$. More precisely, they proved that for any $k \geq 2$,
\[
    \frac{1}{\phi(q)}\sum_{\chi \ne \chi_0} M(\chi)^{2k} \ll_k q^k,
\]
where the sum runs over all nonprincipal characters $\chi\mod{q}$. This bound implies that for all sufficiently
large $\alpha$, a positive proportion of characters modulo $q$ satisfy $M(\chi) \le \alpha \sqrt q$. Moreover,
this proportion tends to $1$ as $\alpha$ tends to infinity. On the other hand, this proportion can never
equal $1$; this is implied, for example, by results of Granville and Soundararajan
\cite{granville-sound-extreme-zeta} on the distribution of $L(1,\chi)$, and follows from our Theorem
\ref{main-corollary}.

With this in mind, let $F_q(\alpha)$ denote the proportion of nonprincipal characters modulo $q$ for which
$M(\chi)$ is smaller than $\alpha \sqrt{q}$:
\[
    F_q(\alpha) = \frac{1}{\phi(q) - 1}\#\{\chi\mod{q} : M(\chi) \le \alpha \sqrt{q}, \chi \ne \chi_0\}.
\]
Furthermore, define
\[
    F(\alpha) = \liminf_{q \rightarrow \infty} F_q(\alpha).
\]
From the previous discussion, 
$F(\alpha) \to 1$ as $\alpha \to \infty$. In this paper,
we study the rate at which it does so. We do this by computing explicit bounds for the moments
$\sum_{\chi \ne \chi_0} M(\chi)^{2k}$. Our main theorem is
\begin{theorem}\label{main-theorem}
For fixed $k$ and $q \rightarrow \infty$,
\[
\frac{1}{\phi(q)} \sum_{\chi \ne \chi_0} M(\chi)^{2k} \le \big(C(k) + o(1)\big)q^k,
\]
where
\[
    C(k) = \exp\big(4k \log\log k + k \log\log\log k + O(k)\big).
\]
\end{theorem}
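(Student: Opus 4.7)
The plan is to combine Pólya's Fourier expansion with a $2k$-th moment computation over characters, and to extract the constant $C(k)$ from an Euler-product analysis of the resulting diagonal sum. I would first reduce to primitive characters, treating imprimitive ones by passing to their primitive inducing characters (at a cost absorbed in the error). Pólya's formula states that, for primitive $\chi\mod q$ and $1\le t\le q$,
\[
S_\chi(t)=\frac{\tau(\chi)}{2\pi i}\sum_{1\le|n|\le q}\frac{\bar\chi(n)(1-e(-nt/q))}{n}+O(\log q),
\]
which, together with $|\tau(\chi)|=\sqrt q$, yields $M(\chi)\le\tfrac{\sqrt q}{2\pi}\sup_\theta|g_\chi(\theta)|+O(\log q)$ where $g_\chi(\theta):=\sum_{1\le|n|\le q}\bar\chi(n)(1-e(n\theta))/n$. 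Thus the problem transfers to controlling the Dirichlet polynomial $g_\chi$.

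Because $|S_\chi|$ is a unit-step function on $[0,q]$, its maximum is attained at some integer, so $\sup_\theta|g_\chi(\theta)|$ need only be evaluated at $\theta=t/q$, $t\in\{0,\dots,q-1\}$. Bounding max by sum of $2k$-th powers,
\[
\sum_{\chi\ne\chi_0}M(\chi)^{2k}\ll\frac{q^k}{(2\pi)^{2k}}\sum_{t=0}^{q-1}\sum_{\chi\ne\chi_0}|g_\chi(t/q)|^{2k}+O\bigl(\phi(q)(\log q)^{2k}\bigr).
\]
Expanding $|g_\chi(t/q)|^{2k}$ as a sum over tuples $(n_1,\dots,n_k,m_1,\dots,m_k)$ and applying orthogonality does the bulk of the analytic work: character orthogonality on the $\chi$-sum imposes the multiplicative congruence $\prod_i n_i\equiv\prod_j m_j\pmod q$, while additive orthogonality on the $t$-sum (after distributing each factor $1-e(nt/q)$) imposes partial-sum additive congruences. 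The main contribution is the ``total diagonal'' on which $(m_j)$ is a permutation of $(n_i)$; off-diagonal terms, where the multiplicative congruence fails to be an equality over $\Z$, would be controlled by Montgomery--Vaughan-style mean-value inequalities together with Rankin's trick and shown to be of lower order.

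On the total diagonal, the resulting sum factors by multiplicativity into an Euler product $\prod_p P_p(k)$ whose local factor at $p$ is essentially $\sum_{a\ge 0}\binom{a+k-1}{k-1}^2/p^{2a}$, weighted by the averaged $|1-e(\cdot)|^2$ factors. Via the Bessel-type asymptotic $\sum_a\binom{a+k-1}{k-1}^2/p^{2a}\sim I_0(2k/p)\sim e^{2k/p}/\sqrt{4\pi k/p}$ for $p\le k$, one has $\log P_p(k)\sim 2k/p$, so Mertens' theorem gives $\sum_{p\le k}\log P_p(k)\sim 2k\log\log k$, while primes $p>k$ contribute $O(k)$. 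Combining this with the extra moment factor from the coupling of additive and multiplicative orthogonality at $\theta=t/q$ yields the leading term $4k\log\log k$; the secondary term $k\log\log\log k$ emerges from the subleading $(4\pi k/p)^{-1/2}$ in the Bessel asymptotic interacting with the Mertens sum over small primes.

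The main obstacle will be this last step. On one hand, the off-diagonal remainders from the orthogonality must be estimated sharply enough that the total diagonal dominates uniformly for fixed $k$ as $q\to\infty$; on the other, extracting the exact secondary term $k\log\log\log k$ requires tracking several layers of logarithmic correction in the Euler product and in the discretization over $t$. Balancing these two demands is where the argument is most delicate, and I would expect the bulk of the technical work to go into (a) a careful Rankin-type estimate for the off-diagonal contribution and (b) an asymptotic expansion of each local factor $P_p(k)$ accurate to $o(k)$ in the logarithm, uniformly over $p\le k$.
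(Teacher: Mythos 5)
Your proposed reduction breaks down at the step where you replace the maximum over $t$ by a sum over all $q$ values of $t$. Writing
\[
\sum_{\chi\ne\chi_0}M(\chi)^{2k}\;\ll\;\frac{q^k}{(2\pi)^{2k}}\sum_{t=0}^{q-1}\sum_{\chi\ne\chi_0}\abs{g_\chi(t/q)}^{2k}
\]
costs a factor of $q$ that cannot be recovered by additive orthogonality in $t$. For each fixed $t$, orthogonality of the characters already gives a diagonal of size $\phi(q)\exp(2k\log\log k+O(k))$; summing that \emph{nonnegative} quantity over the $q$ values of $t$ necessarily produces a contribution of size $q\,\phi(q)\exp(2k\log\log k+O(k))$, i.e.\ $q^{k+1}\phi(q)\cdot(\ldots)$ in the end, which is off by a full factor of $q$. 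You cannot hope to kill this with the $t$-sum: when one expands $\prod_i\big(1-e(n_it/q)\big)\prod_j\big(1-e(-m_jt/q)\big)$ into $e(t(\cdot)/q)$ terms and sums over $t$, the empty-subset term $S=T=\emptyset$ always survives and contributes a stubborn factor of $q$ on every point of the multiplicative diagonal $\prod n_i=\prod m_j$. So the off-diagonal saving you hoped for simply does not exist in the quantity you need to bound.

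The paper's proof handles the "unknown location of the maximum" problem very differently, via the Rademacher--Menchov device. Let $N_\chi$ be where the maximum is attained and write $N_\chi/q$ in base $2$; truncating at level $L\approx\log_2 q^{3/4}$ expresses $S_\chi(N_\chi)$ as a sum of at most $L$ dyadic-block sums $\sum_{qA_\chi(l)<n\le qA_\chi(l+1)}\chi(n)$. Applying H\"older's inequality with weights $l^{\alpha}$ across the at-most-$L$ levels, and then replacing the unknown block $A_\chi(l)$ by a sum over \emph{all} $2^l$ possible dyadic starting points at level $l$, converts the maximum into a controlled sum. The key reason this only costs $2^l$ per level (rather than $q$ total) is Lemma 4.1: the bound $\sum_\chi\abs{\sum_{\alpha q<n\le\beta q}\chi(n)}^{2k}\le C_1^k\phi(q)q^k(\beta-\alpha)^{2k/\log k}(\log 2k)^{2k}+\ldots$ has a decaying factor $(\beta-\alpha)^{2k/\log k}=2^{-l(\,2k/\log k\,)}$ at level $l$, which precisely tames the geometric growth $2^l$ of the number of positions. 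That Rankin-type exponent $2k/\log k$ comes from taking $\sigma=1-1/\log k$ in the divisor-sum estimate.

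This also shows where your account of the constants goes astray. The leading $4k\log\log k$ in $C(k)$ is not produced by a single Euler-product analysis: one contribution of $2k\log\log 2k$ comes from the divisor sum bound in Lemma 4.1, and a second contribution of $2k\alpha\log\log k$ comes from balancing $\sum_l l^{2k\alpha}2^{\,l(1-k/\log k)}$ in the Rademacher--Menchov aggregation; the $k\log\log\log k$ term comes from the factor $H(\alpha)=\big(\sum_l l^{-\alpha(1+1/(2k-1))}\big)^{2k-1}\sim(\alpha-1)^{1-2k}$ at the optimal choice $\alpha=1+1/\log\log k$. It is emphatically \emph{not} the subleading $(4\pi k/p)^{-1/2}$ correction in the Bessel asymptotic, as you suggest. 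Any correct proof has to address the cost of not knowing the location of the maximum, and that cost shows up directly in the $4k\log\log k$ being twice the $2k\log\log k$ one would get for a single fixed $t$ (as in Theorem 2.1).

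<br>

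<br>
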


If there are not many primitive characters modulo $q$, this sum may be much smaller. However, when
we restrict to moduli without many prime factors, we can get a lower bound of a similar shape. For simplicity,
we state the result for prime modulus.

\begin{theorem}\label{theorem2}
For fixed $k$ and $q \rightarrow \infty$ over the primes,
\[
\frac{1}{\phi(q)} \sum_{\chi \ne \chi_0} M(\chi)^{2k} \ge \big(c(k) + o(1)\big)q^k,
\]
where
\[
    c(k) = \exp\big(2k \log\log k + O(k)\big).
\]
\end{theorem}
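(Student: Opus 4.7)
\textbf{Proof plan for Theorem \ref{theorem2}.}
The strategy is to lower bound $M(\chi)$ by $|S_\chi(q/2)|$ for odd non-principal $\chi$, use P\'olya's Fourier expansion to relate $S_\chi(q/2)$ to $L(1, \bar\chi)$, and then compute the resulting $2k$-th moment by character orthogonality. Since $q$ is prime, every non-principal $\chi$ is primitive and P\'olya's expansion applies. At $t = q/2$ one has $e(-nt/q) = (-1)^n$, so the contributions from $\pm n$ reinforce for odd $\chi$ and cancel for even $\chi$; after controlling the tail via $M(\chi) \ll \sqrt q \log q$, this yields
\[
S_\chi(q/2) = -\frac{2\tau(\chi)}{\pi i}\, L^*(1, \bar\chi) + O(\log q), \qquad L^*(1, \bar\chi) := \sum_{\substack{n \ge 1 \\ n \textrm{ odd}}} \frac{\bar\chi(n)}{n} = \Big(1 - \tfrac{\bar\chi(2)}{2}\Big) L(1, \bar\chi).
\]
Using $|\tau(\chi)| = \sqrt q$, $M(\chi) \ge |S_\chi(q/2)|$, and the termwise bound $|a+b|^{2k} \ge 2^{1-2k}|a|^{2k} - |b|^{2k}$, this gives $M(\chi)^{2k} \ge 2^{1-2k}(2/\pi)^{2k} q^k \, |L^*(1, \bar\chi)|^{2k} - O((\log q)^{2k})$.

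Averaging over odd non-principal characters, I would truncate $L^*$ at some $X$ with $X^k = o(q)$ (the tail controlled by partial summation and P\'olya-Vinogradov), expand $|L^*_X|^{2k}$ as a $2k$-fold Dirichlet series, and apply character orthogonality with the projector $(1-\chi(-1))/2$. The diagonal $\prod_{i=1}^k n_i = \prod_{j=1}^k m_j$ dominates, while the anti-diagonal $\prod n_i \equiv -\prod m_j \mod{q}$ contributes negligibly because the products are $\le X^k < q$ and $q$ is prime. One obtains, as $q \to \infty$ with $k$ fixed,
\[
\frac{2}{\phi(q) - 1}\sum_{\substack{\chi \textrm{ odd}\\ \chi \ne \chi_0}} |L^*(1, \bar\chi)|^{2k} \ =\ \sum_{n \textrm{ odd}}\frac{d_k(n)^2}{n^2} + o(1)\ =\ \prod_{p \textrm{ odd}} f(p, k) + o(1), \qquad f(p,k) := \sum_{j \ge 0} \binom{j+k-1}{j}^{2} p^{-2j}.
\]

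It then remains to show $\prod_{p\textrm{ odd}} f(p, k) \ge \exp(2k\log\log k + O(k))$. For each odd prime $p$, retain only the single largest term of $f(p,k)$, which by the ratio test occurs at $j \approx k/(p-1)$; a Stirling calculation gives $f(p,k) \gg (p/(p-1))^{2k}/k$ for $p \le \sqrt k$. Restricting the Euler product to odd primes $p \le \sqrt k$ and applying Mertens' theorem,
\[
\prod_{\substack{p \textrm{ odd}\\ p \le \sqrt k}} f(p, k)\ \ge\ \exp\!\Big(2k\sum_{p \le \sqrt k}\log\tfrac{p}{p-1} + O(k)\Big)\ =\ \exp(2k\log\log k + O(k)),
\]
since the polynomial-in-$k$ losses across the $\pi(\sqrt k) \sim \sqrt k/\log k$ primes accumulate to only $\exp(O(\sqrt k)) = \exp(o(k))$. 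Combining with the preceding steps yields the claimed lower bound with $c(k) = \exp(2k\log\log k + O(k))$.

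The main technical obstacle is bookkeeping for the several error terms---the $O(\log q)$ from the P\'olya truncation, the tail of $L^*(1, \bar\chi)$, and the anti-diagonal term in the orthogonality step---and checking that each is dominated by the main term of order $\phi(q) q^k \exp(2k\log\log k)$. For $k$ fixed and $q \to \infty$ this is routine; Theorem \ref{main-theorem} can be invoked if needed to control the contribution from characters with anomalously large $M(\chi)$ in any auxiliary high-moment estimate.
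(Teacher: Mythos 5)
Your proposal is correct and follows essentially the same route as the paper, which derives Theorem \ref{theorem2} from the precise asymptotic of Theorem \ref{theorem2b} (P\'olya's expansion at $t=q/2$, orthogonality of odd characters, reduction to $\sum_{n\ \mathrm{odd}} d_k(n)^2/n^2$, then an Euler-product estimate). The only real difference is at the last step: where the paper imports the Bessel-function and divisor-sum estimates from Granville--Soundararajan to get the exact constant, you substitute a self-contained ``keep the largest Euler factor term plus Mertens'' argument, which yields only the lower bound $\exp(2k\log\log k + O(k))$ but suffices for, and matches, the statement of Theorem \ref{theorem2}.
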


As a consequence of our work, we have almost doubly exponential decay on the number of characters whose
maximal sum gets large.
\begin{theorem}\label{main-corollary}
As $\alpha \rightarrow \infty$,
\[
    1 - \exp\big(-e^{B\alpha^{1/2 - o(1)}}\big)
    \le F(\alpha) \le
    1 - \exp\left(-\frac{2 e^\gamma}{\pi\alpha} e^{  {}^{\frac{\pi \alpha}{e^\gamma} - A - 1}}\left(1 + O(\alpha^{-1/2})\right)\right)
\]
where $A$ is defined in the statement of Theorem \ref{theorem2b} and $B$ is a positive constant.
\end{theorem}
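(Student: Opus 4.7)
My plan is to derive the two halves of Theorem \ref{main-corollary} from the moment bounds already established. The lower half comes from Markov applied to Theorem \ref{main-theorem}; the upper half will use the distributional refinement that I presume is supplied by Theorem \ref{theorem2b}.

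For the lower bound on $F(\alpha)$, fix any integer $k \ge 2$. Markov combined with Theorem \ref{main-theorem} gives
\[
1 - F_q(\alpha) \;\le\; \frac{1}{\alpha^{2k} q^k (\phi(q)-1)} \sum_{\chi \ne \chi_0} M(\chi)^{2k} \;\le\; \frac{C(k) + o(1)}{\alpha^{2k}},
\]
and taking $\limsup_{q \to \infty}$ yields $1 - F(\alpha) \le C(k)/\alpha^{2k}$ for every $k$. The task reduces to optimizing $k$ against $\alpha$. Writing
\[
\log\!\bigl(C(k)/\alpha^{2k}\bigr) = 4k\log\log k + k \log\log\log k + O(k) - 2k \log\alpha,
\]
the dominant pieces $4k\log\log k$ and $-2k\log\alpha$ balance exactly at $\log\log k = (\log\alpha)/2$, which is therefore the barrier the optimization has to breach. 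Setting $\log\log k = (\log\alpha)/2 - \eta$ produces the residual $-4k\eta + k \log\log\alpha + O(k)$, so $\eta$ must exceed $(\log\log\alpha)/4$ by a positive constant $c/4$ to produce actual negativity. Taking $\eta = (\log\log\alpha)/4 + c/4$ gives a bound of shape $\exp(-ck)$ with $\log k \asymp \alpha^{1/2}(\log\alpha)^{-1/4}$, which matches the claimed $\exp(-e^{B\alpha^{1/2-o(1)}})$ once the $(\log\alpha)^{-1/4}$ factor is absorbed into the $o(1)$ in the exponent of $\alpha$.

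For the upper bound on $F(\alpha)$, Markov cannot be inverted: the moment lower bound in Theorem \ref{theorem2} is a priori consistent with $M(\chi)/\sqrt{q}$ being large only on a set of zero density in the character group. One therefore substitutes Theorem \ref{theorem2b}, which presumably supplies an honest asymptotic lower bound on the density of $\chi$ with $M(\chi)/\sqrt{q} > \alpha$; the stated inequality is then essentially algebraic rearrangement. The specific shape $\exp\!\bigl(-\tfrac{2 e^\gamma}{\pi\alpha}\, e^{\pi\alpha/e^\gamma - A - 1}\bigr)$ is recognizable as a Granville--Soundararajan-type tail law for $|L(1,\chi)|$, consistent with the heuristic that extremes of $M(\chi)/\sqrt{q}$ are governed by those of $|L(1,\chi)|/\pi$ (the factor $\pi/e^\gamma$ is the giveaway). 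The main obstacle I anticipate is the bookkeeping in the $k$-optimization above: the $O(k)$ term inside $C(k)$ is opaque, so one must check that the chosen $\eta$ really overcomes it uniformly in $\alpha$ and that the final bound collapses cleanly into $\exp(-e^{B\alpha^{1/2-o(1)}})$ with a \emph{single} positive constant $B$, not a slowly decaying $B(\alpha)$.
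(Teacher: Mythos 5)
Your proposal follows essentially the same route as the paper: the lower bound on $F(\alpha)$ is obtained by Markov's inequality together with Theorem \ref{main-theorem} and then optimizing $k$ against $\alpha$ (the paper takes $k=\lceil \exp(B\alpha^{1/2}/(\log\alpha)^{1/4})\rceil$ with $B=2^{1/4}e^{-C/4}$ tuned so the leading $O(k)$ terms cancel exactly, which is precisely the balancing you identify); and the upper bound on $F(\alpha)$ is derived from the tail distribution of $|S_\chi(q/2)|$, which the paper packages as Corollary \ref{corollary2b} (itself a consequence of Theorem \ref{theorem2b} via the Granville--Soundararajan argument), combined with the pointwise inequality $M(\chi)\ge |S_\chi(q/2)|$ that transfers the lower-density bound to $M(\chi)$. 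The one point your sketch leaves implicit is this last step -- you need to say explicitly that $|S_\chi(q/2)|$ large forces $M(\chi)$ large, which is what makes the substitution work -- and you cite Theorem \ref{theorem2b} where the object actually invoked is its distributional corollary, but both are minor.
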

\begin{remark}
Our argument produces the more precise lower bound
\[
    1 - 
    \exp\Bigg(-\exp\bigg(\frac{B\alpha^{1/2}}{(\log \alpha)^{1/4}}\bigg)\ 
        \bigg(\frac{2\log\log\alpha}{\log\alpha}
        + O\Big(\frac{1}{\log \alpha}\Big)\bigg)\Bigg),
\]
for some constant B.
However, it is not clear whether this lower bound is best possible.
\end{remark}

One may ask further questions about the distribution of the maximum of character sums. Is $F(\alpha)$ continuous?
Can we replace the $\liminf$ with a limit? The answer to the second question seems likely to be no, but it might
be that the limit will exist if we restrict to primes.
One can ask the same questions about related distributions
defined over all characters with modulus $q \le N$,
\[
    G_N(\alpha) :=
    \frac{ \#\{\chi \mod{q} : q \le N, M(\chi) \le \alpha \sqrt{q}\}}{\#\{\chi \mod{q} : q \le N\}}.
\]
It may then be interesting to compare the limiting distributions of $F_q(\alpha)$ and $G_N(\alpha)$.

\begin{figure}
\includegraphics{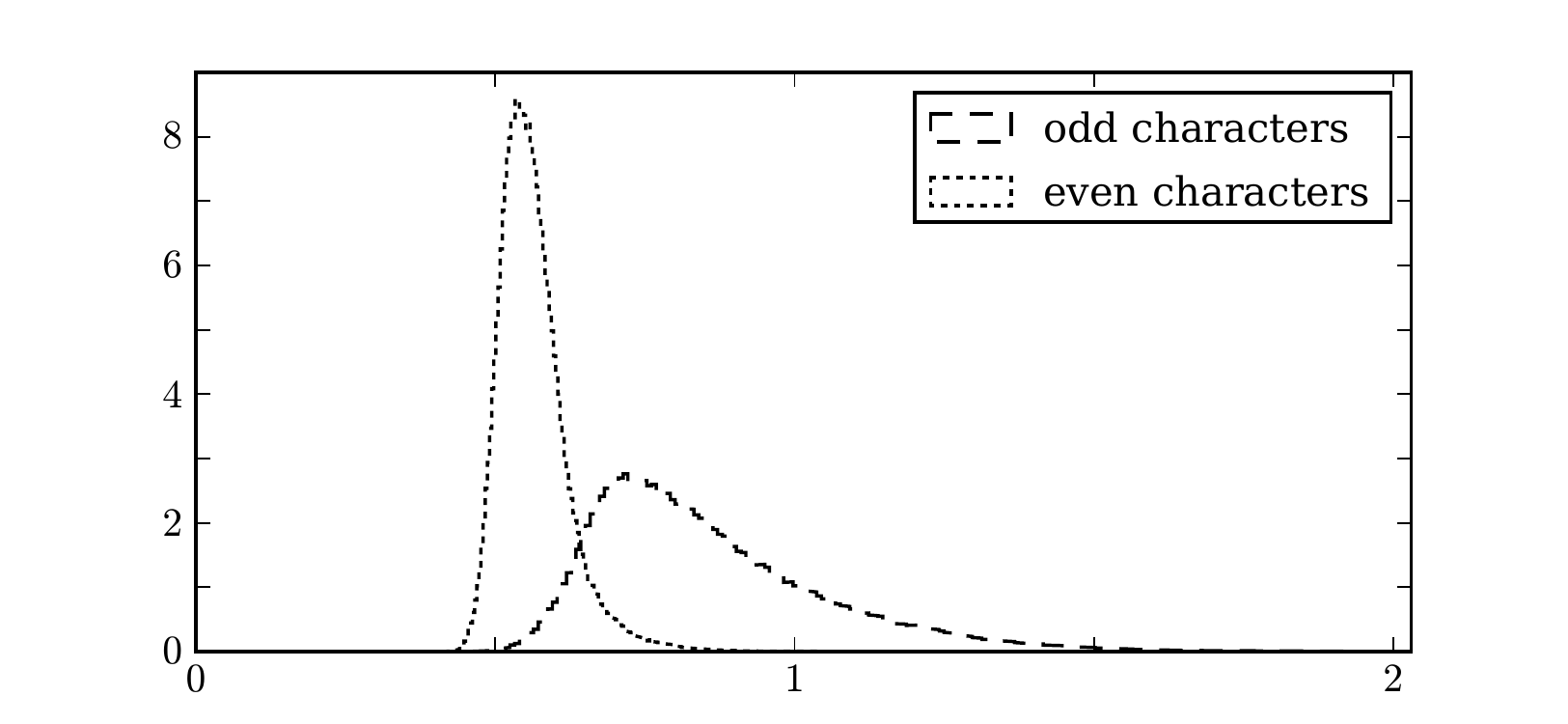}
\caption{Normalized histograms of $M(\chi)/\sqrt{q}$ for $q = 1500007$, split into even and odd characters. Note
that more than 99.999\% of the characters modulo $q$ satisfy $M(\chi)/\sqrt{q} < 2$.}
\end{figure}

\subsection{Acknowledgements}
We would like to thank D. Koukoulopoulos and Y. Lamzouri for helpful discussions, and the referee
for important corrections. Much of this work was completed while the first author was a member of the Institute for Advanced Study and the second author was a visitor there. We are grateful for their hospitality. The first author was supported by the National Science Foundation under grant DMS-0635607 while a member of the Institute for Advanced Study from September 2009 to December 2010, partially supported by NSF grant DMS-0441170, administered by the Mathematical Sciences Research Institute while in residence at MSRI during the Arithmetic Statistics program in Spring 2011, and is currently supported by the NSF under FRG Grant No. DMS-0757627. The second author was partially supported by NSERC grant A5123.

\section{A primer: the distribution of $|S_\chi(q/2)|$}

In this section we derive an estimate of the $2k$-th moment of $|S_\chi(q/2)|$ and use it to derive
results on its distribution. These are similar to results of Granville and Soundararajan \cite{granville-sound-extreme-zeta} on the distribution of $\abs{L(1, \chi)}$, and
we use parts of their work. However, we refrain from giving any uniformity in $q$.
In the
next section, we will develop these ideas to find an upper bound for moments of $M(\chi)$.

\begin{theorem}\label{theorem2b}
For a prime $q$,
\begin{multline}
\frac{1}{\phi(q)} \sum_{\chi \ne \chi_0} \abs{S_\chi(q/2)}^{2k} =
    \left(\frac{e^\gamma}{\pi}\right)^{2k} q^k (\log k)^{2k}
    \exp\left(\frac{2k}{\log k}\left(A + O\left(\frac{1}{\log k}\right)\right) \right) \\
	 \\
    + O_{k, \epsilon}\big(q^{k - 1 + \epsilon}\big).
\end{multline}
Here the sum runs over all nonprincipal characters $\chi\mod{q}$ and
\[
    A = \int_0^2 \frac{\log I_0(t)}{t^2} \ud t,
\]
where
\begin{equation}
\label{eq:Bessel}
    I_0(t) = \sum_{n=0}^\infty \frac{t^{2n}}{2^{2n}(n!)^2}
\end{equation}
is a modified Bessel function.
\end{theorem}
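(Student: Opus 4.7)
The plan is to reduce the computation to a moment average of $L(1,\chi)$-values and then run the method of Granville--Soundararajan.

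The first step is to apply P\'olya's Fourier expansion. For primitive $\chi$ modulo prime $q$,
\[
S_\chi(q/2) = \frac{\tau(\chi)}{2\pi i}\sum_{0 < |n| \le N} \frac{\bar\chi(n)}{n}\bigl(1 - e(-n/2)\bigr) + O\!\left(\frac{q\log q}{N}\right),
\]
and since $1 - e(-n/2)$ vanishes for even $n$ and equals $2$ for odd $n$, and since pairing $\pm n$ produces the factor $1 - \bar\chi(-1)$, the main term collapses. For even $\chi$ it vanishes entirely (which will make their contribution to the $2k$-th moment lower order), while for odd $\chi$ one obtains
\[
S_\chi(q/2) = \frac{2\tau(\chi)}{\pi i}\bigl(1 - \tfrac{\bar\chi(2)}{2}\bigr) L(1,\bar\chi) + (\text{error}),
\]
after letting $N\to\infty$. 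Using $|\tau(\chi)|^2 = q$, this reduces the problem to computing $\frac{1}{\phi(q)}\sum_{\chi\text{ odd}}\bigl|(1-\tfrac{\bar\chi(2)}{2})L(1,\bar\chi)\bigr|^{2k}$, up to the prefactor $(2/\pi)^{2k}q^k$.

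Next, I would approximate $L(1,\bar\chi)$ by a short Euler product $\prod_{p \le y}(1-\bar\chi(p)/p)^{-1}$, with $y$ a small power of $\log q$ and $k$; the approximation error contributes only to the $O_{k,\epsilon}(q^{k-1+\epsilon})$ remainder. Expanding the $2k$-th power yields a Dirichlet series over pairs of $y$-smooth integers:
\[
\bigl|(1-\tfrac{\chi(2)}{2})L(1,\chi)\bigr|^{2k} \approx \sum_{\substack{N,M\ y\text{-smooth}\\ 2 \nmid NM}} \frac{d_k(N)d_k(M)\,\chi(N)\bar\chi(M)}{NM}.
\]
The orthogonality relation for odd characters mod $q$ forces $N \equiv M \pmod q$; when $N,M$ are small compared to $q$ (which holds with overwhelming probability for $y$-smooth numbers below a suitable threshold), this is equivalent to $N = M$. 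The off-diagonal terms can be absorbed into the $O_{k,\epsilon}(q^{k-1+\epsilon})$ error by standard Rankin-type bounds.

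The diagonal sum then factors as an Euler product $\prod_{2 < p \le y}\bigl(\sum_{j\ge 0} d_k(p^j)^2/p^{2j}\bigr)$, and analyzing its asymptotic behaviour is the heart of the proof. Each local factor admits the integral representation $\frac{1}{2\pi}\int_0^{2\pi}|1-e^{i\theta}/p|^{-2k}\,d\theta$, which for $p$ moderately large is $\sim \exp(k^2/p^2)$ (recovering $(\log k)^{2k}$ via Mertens' theorem and yielding the leading constant $(e^\gamma)^{2k}$), while for $p \lesssim 2k$ one applies a Laplace-type approximation to see the local factor behave like $I_0(2k/p)$. Converting $\sum_p \log I_0(2k/p)$ into an integral via partial summation and the prime number theorem, and substituting $t = 2k/p$, gives
\[
\sum_{p \le 2k}\log I_0(2k/p) = \frac{2k}{\log k}\int_0^2 \frac{\log I_0(t)}{t^2}\,dt + O\!\left(\frac{k}{(\log k)^2}\right),
\]
producing the exponent $\frac{2kA}{\log k}$ in the theorem.

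The main obstacle is the last paragraph: combining the exact $I_0$-integral for small primes with the Mertens/Euler-product asymptotic for large primes in a uniform way, so that the leading constant $(e^\gamma/\pi)^{2k}$ and the next-order correction in $1/\log k$ are both captured correctly. A subsidiary technical issue is handling the prime $p = 2$ separately, since the factor $(1-\chi(2)/2)$ removes it from the product and this must be tracked through the moment computation; the even-character contribution and the tail of the P\'olya expansion must also be shown to fall within the stated error.
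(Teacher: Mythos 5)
Your route is valid but genuinely different from the paper's. You pass from $S_\chi(q/2)$ all the way to $L(1,\bar\chi)$ (letting $N\to\infty$ in the Fourier expansion), and then run the Granville--Soundararajan machinery: truncate $L(1,\bar\chi)$ by a short Euler product over $p\le y$, expand, apply orthogonality, and analyze the diagonal Euler product with $I_0$. The paper does not take the detour through the full $L$-value. Instead it keeps the Fourier expansion truncated at $|n|\le q$, which for odd $\chi$ collapses to $\frac{2\tau(\chi)}{\pi i}\sum_{n\le q,\ n\text{ odd}}\frac{\bar\chi(n)}{n}$, and then applies orthogonality of characters mod $q$ directly to this \emph{finite} sum. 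That produces $\frac{\phi(q)}{2}\sum_{a\le q}\big|\sum_{m\ge 0}b(a+mq)\big|^2$ (minus a cross term) with $b(n)=d_k(n)/n$ for odd $n\le q$; the $m\ge 1$ tails and the cross term are shown directly to be $O(\phi(q)q^{-1+\epsilon})$, and then $\sum_{n\le q}b(n)^2 \to \sum d_k(2n+1)^2/(2n+1)^2$ as $q\to\infty$. The $I_0$ integral and Mertens-type analysis of this Euler product is then quoted from Granville--Soundararajan's Lemma 4 and their proof of Theorem 3, which is where the two arguments reconverge. The paper's route buys you a cleaner error term: because the sum is already of length $q$, there is no short-Euler-product approximation to justify, no control of exceptional characters, and no $y$-smooth tail to bound, all of which you would have to supply in your version. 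Also two small points of accuracy: for even $\chi$ and $q$ prime, $S_\chi(q/2)$ is exactly zero, not merely lower order; and the $(\log k)^{2k}$ term in the asymptotic comes from the primes $p\lesssim 2k$ (where $\log I_0(2k/p)\approx 2k/p$ and Mertens applies), not from ``moderately large'' $p$; the large primes only contribute to the secondary $\exp(O(k/\log k))$ factor.
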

\begin{remark}
Theorem \ref{theorem2} follows immediately from the lower bound of Theorem \ref{theorem2b}.
\end{remark}
\begin{remark}
We restrict to prime modulus in this theorem in order to avoid dealing with imprimitive characters. When the modulus is not prime we can get the same quality upper bound, though perhaps with a slightly larger constant in place of $\frac{e^\gamma}{\pi}$. Our lower bound relies on the fact that there is only one imprimitive character modulo a prime, but the same lower bound will hold for any $q$ with only a few imprimitive characters; it would be enough to have  the number of imprimitive characters modulo $q$ as small as $o(\phi(q)/(\log q)^A)$ for every $A$.
\end{remark}

\begin{corollary}\label{corollary2b}
Let $g_q(\alpha)$ denote the proportion of nontrivial characters modulo $q$ such that
$\abs{S_\chi(q/2)} \ge \frac{e^\gamma}{\pi} \alpha \sqrt q$, and let
\[
g(\alpha) = \limsup_{ {\substack{ q \rightarrow \infty \\ q \ \mathrm{prime}} }} g_q(\alpha).
\]
Then as
$\alpha \rightarrow \infty$,
\[
    g(\alpha) = \exp\left( -\frac{2e^{\alpha - A - 1}}{\alpha}\left(1 + O(\alpha^{-1/2})\right)\right)
\]
\end{corollary}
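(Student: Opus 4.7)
The plan is to view Corollary~\ref{corollary2b} as a saddle-point translation of the moment asymptotic in Theorem~\ref{theorem2b} into a tail estimate for the distribution of $|S_\chi(q/2)|/\sqrt q$.

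For the upper bound on $g(\alpha)$, I would apply Markov's inequality: since $|S_\chi(q/2)|\ge\frac{e^\gamma}{\pi}\alpha\sqrt q$ implies $|S_\chi(q/2)|^{2k}\ge\bigl(\frac{e^\gamma\alpha}{\pi}\bigr)^{2k}q^k$, we have
\[
    g_q(\alpha) \le \frac{1}{(\phi(q)-1)\bigl(\tfrac{e^\gamma\alpha}{\pi}\bigr)^{2k}q^k}\sum_{\chi\ne\chi_0}\abs{S_\chi(q/2)}^{2k}.
\]
Substituting Theorem~\ref{theorem2b} (the $O_{k,\epsilon}(q^{k-1+\epsilon})$ term being absorbed into $o_q(1)$) yields
\[
g_q(\alpha) \le \left(\frac{\log k}{\alpha}\right)^{2k}\exp\!\left(\frac{2kA}{\log k}+O\!\left(\frac{k}{(\log k)^2}\right)\right)+o_q(1).
\]
I would then optimize in $k$: differentiating the exponent produces the saddle-point equation $\log\log k=\log\alpha-(A+1)/\log k+O(1/(\log k)^2)$, whose iterative solution is $\log k=\alpha-A-1+O(\alpha^{-1})$. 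Taking $k=\lfloor e^{\alpha-A-1}\rfloor$ and substituting back gives the bound $\exp\bigl(-\frac{2e^{\alpha-A-1}}{\alpha}(1+O(\alpha^{-1/2}))\bigr)$, the $O(\alpha^{-1/2})$ safely absorbing both the quadratic Taylor remainder at the saddle and the loss from rounding $k$ to an integer. Taking $\limsup_{q\to\infty,\,q\text{ prime}}$ then gives the upper bound on $g(\alpha)$.

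For the lower bound I would argue by contradiction using the layer-cake representation
\[
\sum_{\chi\ne\chi_0}\abs{S_\chi(q/2)}^{2k}=\left(\frac{e^\gamma}{\pi}\right)^{2k}q^k(\phi(q)-1)\cdot 2k\int_0^\infty t^{2k-1}g_q(t)\,\ud t.
\]
If $g_q(t)\le\exp\bigl(-(1+\epsilon)\cdot\tfrac{2e^{t-A-1}}{t}\bigr)$ held throughout a neighborhood of the saddle $t_\ast=\log k+A+1$ for all sufficiently large primes $q$, then a Laplace-method evaluation of this integral (the integrand is sharply peaked at $t_\ast$) would yield an upper bound on the moment strictly smaller than the \emph{lower} bound in Theorem~\ref{theorem2b}, a contradiction. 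Contributions from $t$ far from $t_\ast$ are controlled by the trivial $g_q(t)\le 1$ on bounded $t$ together with the Polya--Vinogradov inequality, which forces $g_q(t)=0$ for $t\gtrsim\log q$.

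The principal obstacle is the lower bound: a direct second-moment argument via Cauchy--Schwarz against the $4k$-th moment loses a factor of $2\log 2$ in the exponent and only recovers $\exp(-\tfrac{2\log 2\cdot e^{\alpha-A-1}}{\alpha})$ in place of the sharp $\exp(-\tfrac{2e^{\alpha-A-1}}{\alpha})$. Executing the saddle-point/Laplace estimate to the requisite precision, while controlling the tails of the integral uniformly in $q$ and $\alpha$, is where the $O(\alpha^{-1/2})$ error term naturally arises (from the Gaussian width of the integrand around $t_\ast$ combined with the tail cutoffs).
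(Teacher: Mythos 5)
Your overall strategy is the same one the paper has in mind: the paper's proof is simply a citation to the argument of Granville--Soundararajan \cite[Proof of Theorem 1]{granville-sound-extreme-zeta}, which is precisely a Markov/saddle-point translation of moment asymptotics into tail estimates, and your upper bound via Markov's inequality together with the optimization $\log\log k = \log\alpha - (A+1)/\log k + O(1/(\log k)^2)$ is correct and reproduces both the leading constant $2e^{\alpha-A-1}/\alpha$ and an admissible error term.

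The lower bound, however, has a real gap as written, in the claim that ``contributions from $t$ far from $t_\ast$ are controlled by the trivial $g_q(t)\le 1$ on bounded $t$.'' This fails on both sides of the saddle. Below it: with $t_\ast = \log k + A + 1$ one computes $\int_0^{t_\ast-1}2k\,t^{2k-1}\,\ud t = (t_\ast-1)^{2k} = (\log k + A)^{2k} = (\log k)^{2k}\exp\!\bigl(\tfrac{2kA}{\log k}+O(\tfrac{k}{(\log k)^2})\bigr)$, which is already of the \emph{same order as the moment lower bound in Theorem~\ref{theorem2b}}. So the trivial bound on $g_q$ wipes out the contradiction unless you truncate much farther left (around $t\approx\log k - c$) and in the intermediate window $[\log k - c,\,\alpha]$ invoke the nontrivial Markov upper bound $g_q(t)\le \exp(-\tfrac{2e^{t-A-1}}{t}(1+o(1)))+o_q(1)$ rather than $g_q(t)\le 1$. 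Above the saddle there is a similar problem: using only $g_q(t)\le g_q(\alpha)$ and the P\'olya--Vinogradov cutoff $g_q(t)=0$ for $t\gg\log q$, the contribution is $\int_\alpha^{C\log q}2k\,t^{2k-1}\,\ud t\cdot g_q(\alpha)\asymp(C\log q)^{2k}g_q(\alpha)$, which for fixed $k$ blows up as $q\to\infty$ and yields no contradiction against the ($q$-free) moment lower bound. Again the Markov upper bound on $g_q$ is needed for $t$ a few units above $t_\ast$, and the hypothesized smallness of $g_q(\alpha)$ should only be invoked on an interval of roughly the Gaussian width $\sqrt{(\log k)/k}$ about $t_\ast$, where it beats the Markov bound by a factor $\exp(-c\,k/\log k)$. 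Relatedly, assuming $g_q(\alpha)$ is small only gives control of $g_q(t)$ for $t\ge\alpha$ (by monotonicity), not ``throughout a neighborhood of the saddle''; you need to position $t_\ast$ slightly above $\alpha$ (by a constant amount, much larger than the width) so that the window you actually control dominates the integral. None of this is fatal to the approach---it is exactly the bookkeeping GS carry out---but the sketch as written would not produce the contradiction without these corrections.
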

\begin{proof}[Proof of Corollary \ref{corollary2b}]
This follows from Theorem \ref{theorem2b} and the argument of
\cite[Proof of Theorem 1]{granville-sound-extreme-zeta}.
\end{proof}
\begin{proof}[Proof of Theorem \ref{theorem2b}]
Recall the Fourier expansion
\[
    \sum_{n \le t} \chi(n) = \frac{\tau(\chi)}{2 \pi i}
        \sum_{1 \le \abs{n} \le q} \frac{\overline{\chi}(n)}{n} \bigg(1 - e\Big(-\frac{nt}{q}\Big)\bigg)
            + O(\log q).
\]
Taking $t = q/2$ and applying Minkowski's inequality, it follows that
\begin{equation}\label{misc-eq-2}
\begin{split}
    \sum_{\chi \ne \chi_0} \abs{S_\chi(q/2)}^{2k}
         &= \Bigg\{\Bigg(\sum_{\chi \ne \chi_0} \abs{\frac{\tau(\chi)}{2 \pi i} \sum_{1 \le \abs{n} \le q} \frac{\bar \chi(n)}{n} \big(1 - e(-n/2)\big)}^{2k}\Bigg)^{\frac{1}{2k}} \phantom{MMMMM} \\
         & \phantom{MMMMMMMMMMMMMMMMMM}
         + O\big(\phi(q)^{\frac{1}{2k}} \log q\big)\Bigg\}^{2k} \\
         &= \left[\left(\sum_{\chi \textrm{ odd}}\frac{2^{2k}q^k}{\pi^{2k}}
         \abs{\sum_{\substack{n \leq q \\ n \textrm{ odd}}} \frac{\chi(n)}{n}}^{2k}\right)^\frac{1}{2k}
                + O\big(\phi(q)^\frac{1}{2k}\log q\big)\right]^{2k}.
\end{split}
\end{equation}
Using orthogonality of odd characters, we find that
\begin{equation}\label{misc-eq-1}
    \sum_{\chi \textrm{ odd}}\abs{\sum_{\substack{1 \le n \le q \\ n \textrm{ odd}}} \frac{\chi(n)}{n}}^{2k} =
            \frac{\phi(q)}{2} \sum_{a \leq q} \abs{ \sum_{m \geq 0} b(a + mq)}^2
            - \frac{\phi(q)}{2} \sum_{a \leq q} \sum_{s \geq 0} \sum_{t \geq 0} b(a + sq)b(q - a + tq),
\end{equation}
where $b(n) = 0$ if $n$ is even and
\[
    b(n) = \frac{\#\left\{n_1 n_2 \cdots n_k = n, n_j \le q\right\}}{n}
\]
otherwise.
We separate the terms from the first sum on the right hand side of \eqref{misc-eq-1} where $a + mq > q$ and find that they contribute at
most $\phi(q) q^{-1 + \epsilon}$, and as $(a + sq)(q - a + tq) \ge q^2/4$, the same is true of the second sum. Thus, we have
\[
    \sum_{\chi \odd}\abs{\sum_{\substack{1 \le n \le q \\ n \textrm{ odd}}} \frac{\chi(n)}{n}}^{2k} = \frac{\phi(q)}{2}\sum_{n \leq q} b(n)^2 + O(\phi(q) q^{-1 + \epsilon}).
\]
For odd $n \le q$ we have $b(n) = \frac{d_k(n)}{n}$, so as $q \rightarrow \infty$ the sum $\sum_{n \leq q} b(n)^2$ tends to
\[
\sum_{n \geq 0} \frac{d_k(2n + 1)^2}{(2n + 1)^2} = \left(\sum_{a = 0}^\infty \frac{d_k(2^a)^2}{2^{2a}}\right)^{-1}
    \sum_{n \ge 1} \frac{d_k(n)^2}{n^2}.
\]
By \cite[Lemma 4]{granville-sound-extreme-zeta},
\[
    \left(\sum_{a = 0}^\infty \frac{d_k(2^a)^2}{2^{2a}}\right)^{-1} = 4^{-k} \exp\big(O(\log k)\big),
\]
and now estimates from \cite[Proof of Theorem 3]{granville-sound-extreme-zeta} and
Mertens' theorem give
\[
\begin{split}
    \sum_{n \geq 0} \frac{d_k(2n + 1)^2}{(2n + 1)^2}
            &= \frac{1}{4^k} \prod_{p \le k}\left(1 - \frac{1}{p}\right)^{-2k} \exp\left(\frac{2k}{\log k}\left(A + O\left(\frac{1}{\log k}\right)\right)\right) \\
            &= \frac{e^{2k\gamma}}{4^k}\exp\left(2k \log\log k + \frac{2k}{\log k}\left(A + O\left(\frac{1}{\log k}\right)\right) \right).
\end{split}
\]
Putting this into equation \eqref{misc-eq-2} concludes the proof.
\end{proof}

\section{A divisor sum estimate}

In our estimates (see the proof of Lemma \ref{Lem:MomentsOfCharSum}), we will require an upper bound for the divisor sum
\[
    \sum_{n=1}^\infty \frac{d_k(n)^2}{n^{2\sigma}}.
\]
When $\sigma = 1$, Granville and Soundararajan deal with this sum in \cite{granville-sound-extreme-zeta}, and with a
slight modification of their method we can easily give an upper bound for more general $\sigma$.

As $d_k(n)$ is a multiplicative function, we first deal with the case of this sum for powers of a single prime.
It can be written, up to a relatively small error, as a value of the modified Bessel function $I_0(x)$.
\begin{lemma}\label{divisor-sum-bound}
\begin{equation}\label{eq-divisor-bessel}
\log \left(\sum_{a = 0}^\infty \frac{d_k(p^a)^2}{p^{2a\sigma}}\right) =
\log I_0\left(\frac{2k}{p^\sigma}\right) + O\Big(\frac{k}{p^{2\sigma}}\Big)
\end{equation}
where $I_0$ is the modified Bessel function defined by (\ref{eq:Bessel}).
\end{lemma}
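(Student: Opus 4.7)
My plan is to exploit the hypergeometric structure of the sum. Since $d_k(p^a) = \binom{a+k-1}{a}$, the series on the left of \eqref{eq-divisor-bessel} is ${}_2F_1(k,k;1;p^{-2\sigma})$, and the Euler transformation ${}_2F_1(a,b;c;x) = (1-x)^{c-a-b}{}_2F_1(c-a,c-b;c;x)$ with $(a,b,c)=(k,k,1)$ yields the exact finite-sum identity
\[
\sum_{a=0}^\infty \binom{a+k-1}{a}^2 x^a = (1-x)^{1-2k}\sum_{n=0}^{k-1}\binom{k-1}{n}^2 x^n
\]
(using that $(1-k)_n$ vanishes for $n \ge k$). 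Setting $x = p^{-2\sigma}$ and taking logarithms splits the LHS of \eqref{eq-divisor-bessel} into the piece $-(2k-1)\log(1-p^{-2\sigma}) = 2k/p^{2\sigma} + O(k/p^{4\sigma})$, which is already $O(k/p^{2\sigma})$, and the remaining piece $\log P(p^{-2\sigma})$ with $P(x) := \sum_{n=0}^{k-1}\binom{k-1}{n}^2 x^n$.

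For the latter I would use the Legendre integral identity $P(x) = \frac{1}{\pi}\int_0^\pi(1+2\sqrt{x}\cos\theta+x)^{k-1}d\theta$ (coming from $\sum_{n=0}^N\binom{N}{n}^2 x^n = (1-x)^N P_N(\tfrac{1+x}{1-x})$) alongside the Bessel integral $I_0(2\mu) = \frac{1}{\pi}\int_0^\pi e^{2\mu\cos\theta}d\theta$, setting $y = 1/p^\sigma$ and $\mu = ky$. The log-ratio of the integrands satisfies
\[
\eta(\theta) := (k-1)\log(1 + 2y\cos\theta + y^2) - 2\mu\cos\theta = -2y\cos\theta - (k-1)y^2\cos(2\theta) + O((k-1)y^3)
\]
for $y$ bounded away from $1$. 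Expanding $e^\eta - 1$ in powers of $\eta$ and integrating termwise, with $\frac{1}{\pi}\int_0^\pi e^{2\mu\cos\theta}\cos(m\theta)d\theta = I_m(2\mu)$, yields the first-order correction $P(p^{-2\sigma})/I_0(2\mu) - 1 = -2y\,I_1(2\mu)/I_0(2\mu) - (k-1)y^2\,I_2(2\mu)/I_0(2\mu) + (\text{higher order})$. Combining this with the first piece, the terms $(2k-1)y^2$ from the Euler factor and $-(k-1)y^2$ from the second Bessel term largely cancel: in the regime $\mu \ge 1$ (where $I_m/I_0 \le 1$) the residual is $O(ky^2) = O(k/p^{2\sigma})$, while in the regime $\mu < 1$ the asymptotics $I_m(2\mu)/I_0(2\mu) \asymp \mu^m/m!$ supply extra factors of $\mu$ that absorb each stray $1/p^\sigma$ into $O(\mu^2/k) = O(k/p^{2\sigma})$.

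The main technical obstacle is uniform control of the higher-order terms across the transition regime $\mu \sim 1$; for $\mu$ very large the cleanest route is instead a direct saddle-point analysis at $\theta=0$, comparing the peak value ratio $(1+y)^{2k-1}/e^{2\mu} = \exp(-y - (2k-1)y^2/2 + O(ky^3))$ and matching Gaussian widths to recover the same $O(ky^2)$ error. A secondary issue is the finitely many small primes $p$ where $y = 1/p^\sigma$ is not bounded away from $1$ (so the Taylor expansion of $\log(1+2y\cos\theta+y^2)$ breaks down); when the lemma is applied with $\sigma$ bounded below, only finitely many such primes occur, and each is handled by direct estimation of the finite polynomial $P(p^{-2\sigma})$, with any loss absorbed into the implicit constant.
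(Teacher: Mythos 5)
The proposal shares the paper's underlying idea--compare an integral representation of $\sum_a d_k(p^a)^2 p^{-2a\sigma}$ to the Bessel integral $I_0(2k/p^\sigma)=\int_0^1 e^{(2k/p^\sigma)\cos 2\pi\theta}\,\ud\theta$--but the route it takes makes the comparison substantially harder, and the proposal itself concedes it has not closed the resulting gap.

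The paper's proof is short because it works directly with
\[
\sum_{a\ge 0}\frac{d_k(p^a)^2}{p^{2a\sigma}}=\int_0^1\abs{1-\frac{e(\theta)}{p^\sigma}}^{-2k}\ud\theta,
\]
obtained in two lines from the binomial series for $(1-e(\pm\theta)/p^\sigma)^{-k}$. The crucial feature of this integrand is that, with $y=p^{-\sigma}$,
\[
\log\abs{1-ye(\theta)}^{-2k}=\frac{2k}{p^\sigma}\cos 2\pi\theta+O\!\Big(\frac{k}{p^{2\sigma}}\Big)
\]
\emph{uniformly in $\theta$}, because the discrepancy from the leading term is $2k\sum_{j\ge 2}\cos(2\pi j\theta)/(jp^{j\sigma})=O(k/p^{2\sigma})$ for every $\theta$. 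The $\exp(O(k/p^{2\sigma}))$ factor therefore pulls out of the integral trivially, and the lemma follows.

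Your version, after the Euler transformation, replaces this integrand by the product of the $\theta$-independent factor $(1-y^2)^{1-2k}$ and $\abs{1+ye(\theta)}^{2(k-1)}$. That change of sign and exponent destroys exactly the feature that makes the paper's argument painless: now
\[
(k-1)\log(1+2y\cos\theta+y^2)-2ky\cos\theta=-2y\cos\theta-(k-1)y^2\cos 2\theta+O(ky^3),
\]
and the right-hand side is \emph{not} uniformly $O(k/p^{2\sigma})$--it has an $O(y)$ term. So the error no longer factors out of the integral; you must exploit cancellation under the $\theta$-integral ($\int \cos\theta\,e^{2\mu\cos\theta}\ud\theta$ only approximately vanishes, giving $I_1/I_0$-type corrections), and you must control higher-order terms uniformly across $\mu=ky\lesssim 1$ and $\mu\gtrsim 1$. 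You acknowledge this as ``the main technical obstacle'' and sketch a saddle-point repair for large $\mu$, but do not carry it out. In short: the Euler transformation is not wrong, but it trades a pointwise-uniform error for an oscillatory one that demands genuinely harder estimates; the cancellation analysis (between the $(2k-1)y^2$ from the Euler factor and the $I_2/I_0$ term) is also unnecessary, since each such term is already within the $O(k/p^{2\sigma})$ budget. The cleaner path is to stay with $\abs{1-ye(\theta)}^{-2k}$ and match logarithms of integrands uniformly in $\theta$.

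Finally, a minor point: for $1/2<\sigma\le 1$ we always have $y=p^{-\sigma}\le 2^{-1/2}$, so $y$ is bounded away from $1$ for \emph{all} primes, not merely all but finitely many; your fallback of ``direct estimation for finitely many small primes'' is not needed (and as stated would need care to keep the implied constant independent of $k$).
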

\begin{proof}
Note that for any prime $p$, we have the identity
\[
    d_k(p^a) = {k + a - 1 \choose a}.
\]
By applying the binomial theorem to each of
$(1 - e(\theta)/p^\sigma)^{-k}$ and
$(1 - e(-\theta)/p^\sigma)^{-k}$
and multiplying the resulting series, we see that
\[
    \sum_{a = 0}^\infty \frac{d_k(p^a)^2}{p^{2a\sigma}} = \int_0^1 \abs{1 - \frac{e(\theta)}{p^\sigma}}^{-2k} \ud \theta.
\]
It remains only to estimate the integral. To this end, observe that
\[
\begin{split}
    \log \abs{1 - \frac{e(\theta)}{p^{\sigma}}} & = \Re \left\{\log \left(1 - \frac{e(\theta)}{p^{\sigma}}\right)\right\} \\
        & = \Re \left\{  -\frac{e(\theta)}{p^\sigma} - \frac{e(2 \theta)}{2p^{2\sigma}} -  \frac{e(3 \theta)}{3p^{3\sigma}} - \cdots \right\} \\
        & = -\sum_{j=1}^\infty \frac{\cos(2j\pi \theta)}{jp^{j\sigma}} \\
        &= -\frac{\cos(2 \pi \theta)}{p^\sigma} + O(p^{-2\sigma}) .
\end{split}
\]
Hence,
\[
\begin{split}
    \log \int_{0}^1 \abs{1 - \frac{e(\theta)}{p^\sigma}}^{-2k} \ud \theta
        & = \log \int_{0}^1 \exp\left(\frac{2k}{p^\sigma} \cos(2 \pi \theta)\right)\ud \theta + O\Big(\frac{k}{p^{2\sigma}}\Big) \\
        & = \log I_0\left(\frac{2k}{p^\sigma}\right) +  O\Big(\frac{k}{p^{2\sigma}}\Big).
\end{split}
\]
\end{proof}

Now we can give our upper bound. The following is not as precise as it could be, but is sufficient for our purposes,
where $\sigma$ will be very close to $1$.

\begin{proposition}
For $1/2 < \sigma \le 1$,
\[
 \sum_{n=1}^\infty \frac{d_k(n)^2}{n^{2\sigma}} \le \exp\left(2k \sigma \log\log (2k)^{1/\sigma} + \frac{ (2k)^{1/\sigma}}{2\sigma - 1} + O\left( \frac{k}{2\sigma - 1} + \frac{(2k)^{1/\sigma}}{\log(3 (2k)^{1/\sigma - 1} )}\right) \right).
\]
\end{proposition}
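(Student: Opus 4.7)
The plan is to follow the Granville--Soundararajan strategy used in the proof of Theorem \ref{theorem2b}, but now allowing $\sigma$ to vary over $(1/2,1]$. Writing
$\sum_{n=1}^\infty d_k(n)^2/n^{2\sigma} = \prod_p F_p$
with $F_p = \sum_{a\ge 0} d_k(p^a)^2/p^{2a\sigma}$, Lemma \ref{divisor-sum-bound} gives $\log F_p = \log I_0(2k/p^\sigma) + O(k/p^{2\sigma})$. Summing over primes, the error contributes $O\bigl(k\sum_p p^{-2\sigma}\bigr) = O(k/(2\sigma-1))$, which is one of the error terms of the proposition; it remains to estimate $\sum_p \log I_0(2k/p^\sigma)$.

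I split at $y := (2k)^{1/\sigma}$, the threshold where $2k/p^\sigma = 1$. For $p > y$ the argument of $I_0$ is at most $1$, and the elementary bound $I_0(x) \le e^{x^2/4}$ (immediate from the defining series using $(n!)^2 \ge n!$) yields
\[
\sum_{p > y}\log I_0(2k/p^\sigma) \le k^2 \sum_{p > y} p^{-2\sigma} \le \frac{k^2 y^{1-2\sigma}}{2\sigma - 1} = \frac{(2k)^{1/\sigma}}{4(2\sigma - 1)},
\]
matching the main $(2k)^{1/\sigma}/(2\sigma-1)$ term. For $p \le y$ I use the cruder inequality $I_0(x) \le e^x$ (immediate from the integral formula recalled in the proof of Lemma \ref{divisor-sum-bound}), reducing the task to estimating $2k\sum_{p\le y} p^{-\sigma}$.

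The technical heart of the proof is bounding $\sum_{p\le y} p^{-\sigma}$ uniformly in $\sigma\in(1/2,1]$. Applying Abel summation to Mertens' formula $\sum_{p\le t} 1/p = \log\log t + M + O(1/\log t)$ with weight $p^{1-\sigma}$, and exploiting the cancellation between the boundary and integral terms of size $y^{1-\sigma}\log\log y$, gives
\[
\sum_{p\le y} 1/p^\sigma = \int_2^y \frac{dt}{t^\sigma \log t} + O\!\left(\frac{y^{1-\sigma}}{\log y}\right),
\]
and the substitution $v=(1-\sigma)\log t$ rewrites the integral as $\int_a^b e^v/v\,dv$ with $a=(1-\sigma)\log 2$ and $b=(1-\sigma)\log y$. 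The hard part will be that this integral behaves very differently in the Mertens regime $b\le 1$ (where it is essentially $\log(b/a)=\log\log y + O(1)$) and in the PNT regime $b>1$ (where $e^v/v$ spikes near $v=b$ and the integral grows like $e^b = y^{1-\sigma}$). I handle both by splitting at $v=1$: on $[a,\min(1,b)]$ the chord estimate $e^v \le 1+(e-1)v$ extracts a main contribution of $\log\log y + O(1)$, while on $[1,b]$ (when nonempty) the crude bound $\int_1^b e^v/v\,dv \le e^b$ contributes $y^{1-\sigma}$. Multiplying by $2k$, the first piece gives $2k\log\log y + O(k)$, and the discrepancy $2k(1-\sigma)\log\log y$ with the stated main term $2k\sigma\log\log y$ is $O(k)$ in the regime $b\le 1$ (since $(1-\sigma)\log\log y \le \log\log y/\log y$) and otherwise is comfortably absorbed into the second listed error $(2k)^{1/\sigma}/\log(3(2k)^{1/\sigma-1})$. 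The second piece contributes $2ky^{1-\sigma} = (2k)^{1/\sigma}$, which is absorbed into $(2k)^{1/\sigma}/(2\sigma-1)$ since $2\sigma-1\le 1$; and the Abel boundary term $O((2k)^{1/\sigma}/\log y)$ is also comparable to $(2k)^{1/\sigma}/\log(3(2k)^{1/\sigma-1})$. Collecting all contributions yields the claimed inequality.
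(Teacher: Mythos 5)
Your proposal is correct and follows essentially the same path as the paper: Euler-product factorization, Lemma \ref{divisor-sum-bound} for the local factors, a split of primes at $y=(2k)^{1/\sigma}$, the bounds $I_0(x)\le e^{x}$ for small $p$ and $I_0(x)\le e^{cx^2}$ for large $p$, and then a Mertens-type estimate for $\sum_{p\le y}p^{-\sigma}$. The one place you depart from the paper is in that last step: the paper isolates Lemma \ref{lemma-prime-sum}, which is proved by partial summation directly against $\pi(t)$ and immediately delivers the clean main term $\sigma\log\log x$ with a single error $O(x^{1-\sigma}/\log 3x^{1-\sigma})$, whereas you re-derive the estimate by Abel summation against Mertens' $\sum_{p\le t}1/p$ and analyze the resulting integral via the substitution $v=(1-\sigma)\log t$. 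That route is legitimate and the exponential-integral picture is illuminating, but two small caveats: your intermediate display $\sum_{p\le y}1/p^\sigma=\int_2^y\frac{dt}{t^\sigma\log t}+O(y^{1-\sigma}/\log y)$ as written omits an $O(1)$ constant and a secondary error of size $(1-\sigma)\int_2^y\frac{dt}{t^\sigma\log t}$ coming from the $O(1/\log t)$ term in Mertens; and your main term emerges as $\log\log y$ rather than $\sigma\log\log y$, so you need the absorption step that you sketch in the final paragraph. Both of these are genuinely absorbable (the $O(1)$ and the $(1-\sigma)\sigma\log\log y$ discrepancy become $O(k)\subseteq O(k/(2\sigma-1))$ after multiplying by $2k$; the $b>1$ overflow sits inside the $(2k)^{1/\sigma}$ errors), so there is no gap, only some bookkeeping that you wave at rather than spelling out. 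The paper's choice of a stand-alone lemma whose main term already carries the factor $\sigma$ avoids this reshuffling; your approach buys nothing extra but is equally valid.
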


\begin{proof}
Since $d_k(n)$ is multiplicative, we have
\[
\begin{split}
    \sum_{n=1}^\infty \frac{d_k(n)^2}{n^{2\sigma}} &= \prod_p \sum_{a = 0}^\infty \frac{d_k(p^a)^2}{p^{2\sigma a} }.
\end{split}
\]
We now apply the previous lemma, and use the fact that $I_0(x) < \min(\exp(x), \exp(x^2))$, to get
\[
\log \sum_{n=1}^\infty \frac{d_k(n)^2}{n^{2\sigma}}  \le 2k\sum_{p \le (2k)^{1/\sigma}} \frac{1}{p^\sigma} + 4k^2 \sum_{p > (2k)^{1/\sigma}} \frac{1}{p^{2\sigma}} + 2k \sum_p \frac{1}{p^{2\sigma}}.
\]
The first sum on the right hand side will be dealt with in Lemma \ref{lemma-prime-sum}, and it has size
\[
    \sigma \log\log (2k)^{1/\sigma} +  O\left( \frac{(2k)^{1/\sigma - 1}}{\log(3 (2k)^{1/\sigma - 1} )}\right).
\]
For the other two sums, we ignore the fact that they are restricted to primes and sum over all integers, obtaining
\[
\log \sum_{n=1}^\infty \frac{d_k(n)^2}{n^{2\sigma}} \le 2k \sigma \log\log (2k)^{1/\sigma} + \frac{ (2k)^{1/\sigma}}{2\sigma - 1} + O\left(\frac{k}{2\sigma - 1} + \frac{(2k)^{1/\sigma}}{\log(3 (2k)^{1/\sigma - 1} )}\right).
\]
\end{proof}

The lemma promised in the above proof is the following.

\begin{lemma} \label{lemma-prime-sum} Uniformly for $1/2 \le \sigma \le 1$,
\[
    \sum_{p < x} \frac{1}{p^\sigma} = \sigma \log\log x + O\left(\frac{x^{1-\sigma}}{\log 3x^{1-\sigma}}\right).
\]
\end{lemma}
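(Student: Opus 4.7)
The plan is to compare $\sum_{p < x} p^{-\sigma}$ to the smooth integral $I(\sigma, x) := \int_2^x \frac{dt}{t^\sigma \log t}$ by partial summation from Mertens' theorem, and then to evaluate $I(\sigma, x)$ by an explicit change of variable. Starting from $B(t) := \sum_{p \le t} 1/p = \log\log t + M + O(1/\log t)$, I would write $\sum_{p < x} p^{-\sigma} = \int_{2^-}^x t^{1-\sigma}\,dB(t)$, integrate by parts, substitute the Mertens expansion, and integrate by parts once more inside the resulting $\int_2^x \log\log t \cdot t^{-\sigma}\,dt$. The two $M x^{1-\sigma}$ contributions cancel (leaving only $M\cdot 2^{1-\sigma} = O(1)$), and the $x^{1-\sigma}\log\log x$ boundary terms produced on both sides also cancel, leaving
\[
    \sum_{p < x} \frac{1}{p^\sigma}
    = \bigl(1 + O(1-\sigma)\bigr)\, I(\sigma, x) + O\!\left(\frac{x^{1-\sigma}}{\log x}\right) + O(1),
\]
with the $O(1-\sigma)$ factor coming from integrating the Mertens error $O(1/\log t)$ against $(1-\sigma)t^{-\sigma}$.

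Next, for $\sigma < 1$ the substitution $u = (1-\sigma)\log t$ gives
\[
    I(\sigma, x) = \int_{(1-\sigma)\log 2}^{(1-\sigma)\log x} \frac{e^u}{u}\,du.
\]
Splitting $e^u/u = 1/u + (e^u - 1)/u$, the first piece integrates in closed form to $\log\log x - \log\log 2$, producing exactly the main term I want. For the second piece, a single integration by parts on $F(s) := \int_0^s (e^u - 1)/u\,du$ (together with the observation that the integrand is bounded near $u = 0$ and admits the usual exponential-integral asymptotic for large $u$) gives the uniform bound $F(s) \ll e^s/(1+s)$ for $s \ge 0$. Taking $s = (1-\sigma)\log x$ and noting that $e^s/(1+s) \asymp x^{1-\sigma}/\log(3x^{1-\sigma})$, I conclude $I(\sigma, x) = \log\log x + O(x^{1-\sigma}/\log(3x^{1-\sigma}))$. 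The edge case $\sigma = 1$ is just Mertens itself, where $x^{1-\sigma}/\log(3x^{1-\sigma}) = 1/\log 3 = O(1)$ matches the envelope.

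The one remaining step, and the main book-keeping point of the proof, is to convert the $\log\log x$ main term into $\sigma\log\log x$ and to absorb the multiplicative $(1 + O(1-\sigma))$ slop from the partial summation. Writing $\log\log x = \sigma\log\log x + (1-\sigma)\log\log x$, it suffices to check $(1-\sigma)\log\log x = O(x^{1-\sigma}/\log(3 x^{1-\sigma}))$ uniformly on $1/2 \le \sigma \le 1$. Setting $L = \log x$ and $y = (1-\sigma)L$, the relevant ratio simplifies to
\[
    (1-\sigma)\log\log x \cdot \frac{\log(3 x^{1-\sigma})}{x^{1-\sigma}}
    = \frac{\log L}{L}\cdot \frac{y(\log 3 + y)}{e^y},
\]
and both $(\log L)/L$ on $L \ge e$ and $y(\log 3 + y) e^{-y}$ on $y \ge 0$ have bounded maxima, so the ratio is $O(1)$. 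The same calculation absorbs $(1-\sigma)I(\sigma,x)$, and $x^{1-\sigma}/\log x \le x^{1-\sigma}/\log(3x^{1-\sigma})$ for $x$ large. Bounded values of $x$ or $L$ are handled by the usual adjustment of implicit constants. The main obstacle in the plan is really just this final uniform error tracking, rather than any single analytic step; the entire argument is otherwise a direct invocation of Mertens plus a change-of-variables computation.
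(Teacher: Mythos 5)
Your proof is correct, and it is a differently organized variant of the paper's argument rather than a verbatim match. The paper (adapted, as the remark states, from Norton's Lemma 3.1) does a \emph{single} partial summation against $\pi(u)$, writes $\sum_{p<x}p^{-\sigma}=x^{-\sigma}\pi(x)+\sigma\int_2^x u^{-(\sigma+1)}\pi(u)\,du$, approximates the integrand by $\tfrac{1}{u\log u}$ so that the factor $\sigma$ multiplying $\log\log x$ appears automatically from $\frac{d}{du}u^{-\sigma}=-\sigma u^{-\sigma-1}$, and then controls the leftover integral by expanding $u^{1-\sigma}-1$ as a power series in $(1-\sigma)\log u$ and integrating term by term. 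You instead do a \emph{double} partial summation anchored to Mertens' theorem $\sum_{p\le t}1/p=\log\log t+M+O(1/\log t)$, cancel the $x^{1-\sigma}\log\log x$ and $Mx^{1-\sigma}$ boundary terms, reduce $\int_2^x\frac{dt}{t^\sigma\log t}$ to $\int_{(1-\sigma)\log 2}^{(1-\sigma)\log x}\frac{e^u}{u}\,du$, split off $1/u$, and bound $F(s)=\int_0^s\frac{e^u-1}{u}\,du\ll e^s/(1+s)$. At the technical core the two arguments are the same: your $F(s)$ is precisely the power series $\sum_{n\ge1}\frac{s^n}{n!\,n}$ with $s=(1-\sigma)\log x$ that the paper's term-by-term integration produces. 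What is genuinely different: (i) your input is Mertens (elementary), whereas the paper's error analysis needs the PNT--strength statement $\pi(u)=\tfrac{u}{\log u}\bigl(1+O(1/\log u)\bigr)$; and (ii) you must re-normalize the main term via $\log\log x=\sigma\log\log x+(1-\sigma)\log\log x$ and verify $(1-\sigma)\log\log x\ll x^{1-\sigma}/\log(3x^{1-\sigma})$ uniformly, an extra book-keeping step the paper avoids because its $\sigma$ factor is present from the outset. You carry out that final uniform verification correctly, so there is no gap.
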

\begin{remark}
The proof of this lemma is fairly straightforward. It is adapted from the proof of K. Norton's Lemma 3.1 in \cite{norton}.
\end{remark}
\begin{proof}
Integrating by parts, we have
\[
    \sum_{p < x} \frac{1}{p^\sigma} = x^{-\sigma} \pi(x) + \sigma \int_2^x u^{-(\sigma + 1)} \pi(u) \ud u.
\]
With think of $\sigma$ as being close to one and we approximate the integrand by
\[
    \frac{1}{u \log u},
\]
which integrates to $\log \log x$. So we write
\[
    \sum_{p < x} \frac{1}{p^\sigma} = \sigma \log \log x + x^{-\sigma} \pi(x) +
        \sigma \int_2^x u^{-(\sigma + 1)} \pi(u) - \frac{1}{u \log u} \ud u.
\]
The term $x^{-\sigma} \pi(x)$ is clearly of the size of the error term that we are looking for, so we are left to analyze the integral. It is of size
\[
        \int_2^x \frac{1}{u^\sigma \log u}  - \frac{1}{u \log u} \ud u + O\left(\int_2^x \frac{1}{u^\sigma (\log u)^2}\ud u\right),
\]
where we again think of $\sigma$ as close to $1$ and, and rewrite this as
\[
        \int_2^x \frac{u^{1-\sigma} - 1}{u\log u} \ud u + \frac{1}{\log x} +
            O\left(\int_2^x \frac{u^{1-\sigma} - 1}{u (\log u)^2} \ud u\right).
\]
To finish, we can expand $u^{1-\sigma} - 1$ as a power series and integrate term by term to find that both integrals are
\[
    \ll \frac{x^{1 - \sigma} - 1}{(1 - \sigma)\log x} \ll \frac{u^{1 - \sigma}}{(1-\sigma)\log 3x}.
\]
\end{proof}

\section{An upper bound for the moments of $M(\chi)$}
To deal with the sum
\[
    \sum_{\chi \ne \chi_0} M(\chi)^{2k}
\]
we will use a variant of Theorem \ref{theorem2b} for sums of general length. The method of proof is very similar to that of Theorem \ref{theorem2b}, but there is an additional complication: we have a much worse understanding of the exponentials appearing in the Fourier expansion. Because of this, the method produces only an upper bound.

To deal with the fact that we do not know for which $n$ the maximum of $S_\chi(n)$ is attained, we follow Montgomery and Vaughan in employing the Rademacher--Menchov device. (See \cite{menchov-rademacher-device} for another application.) This approach, which will be described in the proof of Theorem \ref{main-theorem} below, involves dividing the sum $S_\chi(n)$ into dyadic blocks, applying H\"older's inequality, and then summing over all possible dyadic subdivisions.

We have the following upper bound for sums of general length and position.
\begin{lemma}
\label{Lem:MomentsOfCharSum}
There exist an absolute constant $C_1$, and a function $C_2(k, \epsilon)$ independent of $q$, such that
\begin{equation}\label{eq:lemma41}
    \sum_{\chi \ne \chi_0} \abs{ \sum_{\alpha q < n \le \beta q} \chi(n)}^{2k} \le C_1^k \phi(q) q^k \left(\beta - \alpha\right)^{2k/\log k} (\log 2k)^{2k} + C_2(k, \epsilon)\phi(q)q^{k - 1 + \epsilon}
\end{equation}
whenever $0 \le \alpha \le \beta \le 1$.
\end{lemma}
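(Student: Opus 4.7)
The plan is to adapt the Fourier-expansion strategy of Theorem \ref{theorem2b} to a sum of arbitrary position and length. Writing the interval sum as $S_\chi(\beta q)-S_\chi(\alpha q)$ and applying P\'olya's Fourier expansion at each endpoint gives
\[
\sum_{\alpha q<n\le\beta q}\chi(n)=\frac{\tau(\chi)}{2\pi i}\sum_{1\le|n|\le q}c_n\bar\chi(n)+O(\log q),
\]
where $c_n:=(e(-n\alpha)-e(-n\beta))/n$. Minkowski's inequality peels off the $O(\log q)$ tail with an error of size $\phi(q)(\log q)^{2k}$, comfortably absorbed into the $C_2(k,\epsilon)\phi(q)q^{k-1+\epsilon}$ term. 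Using $|\tau(\chi)|^2\le q$ pulls out a factor $q^k$, so it remains to bound $\sum_{\chi\ne\chi_0}\bigl|\sum_n c_n\bar\chi(n)\bigr|^{2k}$.

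Expanding the $2k$-th power and invoking character orthogonality yields
\[
\sum_\chi\Bigl|\sum_n c_n\bar\chi(n)\Bigr|^{2k}=\phi(q)\sum_{\substack{\mathbf{n},\mathbf{m}\\n_1\cdots n_k\equiv m_1\cdots m_k\,(q)}}c_{n_1}\cdots c_{n_k}\overline{c_{m_1}\cdots c_{m_k}},
\]
which I would split into the diagonal piece $\prod n_i=\prod m_i$ (equality as integers) and the off-diagonal piece where the products agree only modulo $q$. The principal character contributes at most $O((\log q)^{2k})$ and disappears into the error, while the sign ambiguities from extending to $|n|\le q$ produce only harmless factors $2^{O(k)}$.

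The central ingredient on the diagonal side is the interpolation bound
\[
|c_n|\le\min\!\bigl(\tfrac{2}{|n|},\,2\pi|\beta-\alpha|\bigr)\le 2^\sigma(2\pi)^{1-\sigma}\,\frac{|\beta-\alpha|^{1-\sigma}}{|n|^\sigma}\qquad(0\le\sigma\le 1),
\]
obtained as a weighted geometric mean of the trivial and Taylor bounds. Grouping the diagonal by the common value $N=|\prod n_i|$ dominates it by a constant-to-the-$k$ times $|\beta-\alpha|^{2k(1-\sigma)}\sum_{N\ge 1}d_k(N)^2/N^{2\sigma}$. Choosing $\sigma=1-1/\log k$ turns the divisor-sum proposition of Section~3 into $\exp(2k\log\log 2k+O(k))=(\log 2k)^{2k}e^{O(k)}$ and simultaneously produces the advertised prefactor $(\beta-\alpha)^{2k/\log k}$. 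Combined with the earlier factor $q^k$ from $|\tau(\chi)|^{2k}$, this is exactly the leading term of \eqref{eq:lemma41}.

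The off-diagonal I would handle using only the trivial bound $|c_n|\le 2/|n|$ together with $d_k(n)\ll_k(\log q)^{k-1}$ for $n\le q^k$, which reduces the remaining sum to an archimedean expression over $(A,B)$ with $A,B\le q^k$, $A\equiv\pm B\pmod q$, but unequal as signed integers. Partitioning by residue classes $r\bmod q$ and setting $S_r=\sum_{A\equiv r\,(q),\,A\le q^k}1/A$, the $\pi^2/6$ term in $\sum_{r\ne 0}S_r^2$ is exactly cancelled by the subtracted $\sum_{A\le q^k}1/A^2$, leaving a saving of $O_k((\log q)^{O_k(1)}/q)$, which together with $\phi(q)q^k$ yields the claimed $C_2(k,\epsilon)\phi(q)q^{k-1+\epsilon}$ error. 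The main technical obstacle I foresee is the book-keeping on the diagonal side: verifying that replacing the constrained factorizations (with each factor $\le q$) by unrestricted $d_k(N)$ and folding in the $2^{O(k)}$ sign factor costs only an overall constant $C_1^k$ and no extraneous powers of $\log 2k$.
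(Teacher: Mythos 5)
Your overall strategy matches the paper's: the P\'olya Fourier expansion, orthogonality, a split into diagonal and off-diagonal terms, an interpolation bound $|c_n|\ll\min(1/|n|,|\beta-\alpha|)$ combined with Rankin's trick at $\sigma=1-1/\log k$, and the divisor-sum estimate from Section~3. But there is a genuine gap at the very first step: the Fourier expansion
\[
\sum_{n\le t}\chi(n)=\frac{\tau(\chi)}{2\pi i}\sum_{1\le|n|\le q}\frac{\bar\chi(n)}{n}\Big(1-e\big(-\tfrac{nt}{q}\big)\Big)+O(\log q)
\]
is valid only for \emph{primitive} $\chi\bmod q$, whereas the lemma sums over all nonprincipal characters $\chi\bmod q$. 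For imprimitive $\chi$ the twisted Gauss sums $\sum_n\chi(n)e(na/q)$ do not factor as $\bar\chi(a)\tau(\chi)$, and $\tau(\chi)$ can even vanish, so none of your $c_n$-expansion makes sense for those characters. The paper devotes the first half of its proof to exactly this point: it reduces \eqref{eq:lemma41} to the analogous statement for primitive characters by writing $\chi$ in terms of its inducing character $\chi^*$ of conductor $r\mid q$, expressing $\sum_{\alpha q<n\le\beta q}\chi(n)$ via sums of $\chi^*$ over sub-intervals, and controlling the cost with the divisor-sum estimate $\sum_{r\mid q}d(q/r)^{2k+1}\phi(r)r^k\le\phi(q)q^k\sum_{n\ge1}d(n)^{2k+1}/n^k$. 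Your proposal needs this reduction and currently does not have it.

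A secondary, fixable issue: you invoke the pointwise bound $d_k(n)\ll_k(\log q)^{k-1}$ for $n\le q^k$, which is false; it is the \emph{average} of $d_k(n)$ over $n\le x$ that is $\asymp(\log x)^{k-1}$, while individual values can be as large as $n^{o(1)}$. In the off-diagonal treatment you should use $d_k(n)\ll_{k,\epsilon}n^\epsilon$, as the paper does when it bounds $\sum_{a\le q}\big|\sum_{m\ge1}b(a+mq)\big|^2\ll q^{-2}\sum_{a\le q}\big|\sum_{m\le q^k}q^\epsilon/(a/q+m)\big|^2\ll q^{\epsilon-1}$; this still delivers the required $C_2(k,\epsilon)\phi(q)q^{k-1+\epsilon}$ error term. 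Also, the claimed exact cancellation of $\pi^2/6$ against $\sum_A 1/A^2$ is only approximate (each $S_r$ contains additional $m\ge1$ terms), though the resulting bound $O_k(q^{-1+\epsilon})$ is indeed correct. Apart from these two points the diagonal analysis, the choice $\sigma=1-1/\log k$, and the bookkeeping of the $2^{O(k)}$ constants into $C_1^k$ are all sound and coincide with the paper's argument.
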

\begin{remark}
We have tailored this bound rather specifically for our purposes. The division by $\log k$ in the exponent of $\beta - \alpha$ comes from an application of Rankin's trick, and is suitable for our application, but there is some flexibility in the method of proof.
\end{remark}

\begin{proof}
We first note that it suffices to prove that the same bound holds for the sum restricted to primitive characters:
\begin{equation}\label{eq:lemma41_primitive}
    \sumstar{\chi \ne \chi_0} \abs{ \sum_{\alpha q < n \le \beta q} \chi(n)}^{2k} \le C_1^k \phi(q) q^k \left(\beta - \alpha\right)^{2k/\log k} (\log 2k)^{2k} + C_2(k, \epsilon)\phi(q)q^{k - 1 + \epsilon},
\end{equation}
where $\sum^*$ denotes that the sum runs over primitive characters. To see this, let $\chi^*$ denote
the primitive character inducing $\chi$ and observe that
\[
\begin{split}
    \sum_{\chi \ne \chi_0} \abs{ \sum_{\alpha q < n \le \beta q} \chi(n)}^{2k} &=
        \sum_{\substack{r | q \\ r > 1}} \sumstar{{\chi^*\!\!\mod r}} \abs{ \sum_{\alpha q < n \le \beta q} \chi^*(n) -
            \sum_{\substack{e | \frac{q}{r} \\ e > 1}} \sum_{ \alpha q/e < m \le \beta q/e } \chi^*(me) }^{2k} \\
        &\le  \sum_{\substack{r | q \\ r > 1}} d(q/r)^{2k} \sum_{e | \frac{q}{r}} \sumstar{\chi^*\!\!\mod r}
            \abs{\sum_{ \alpha q/e < m \le \beta q/e } \chi^*(me) }^{2k}.
\end{split}
\]
Inserting \eqref{eq:lemma41_primitive}, we find that this is bounded by
\[
C_1^k (\beta - \alpha)^{2k/\log k} (\log 2k)^{2k}\sum_{r | q} d(q/r)^{2k + 1} \phi(r) r^k  +
    C_2(k, \epsilon)\sum_{r | q} d(q/r)^{2k + 1} \phi(r) r^{k - 1 + \epsilon}.
\]
Noting that
\[
    \sum_{r | q} d(q/r)^{2k + 1} \phi(r) r^k \le \phi(q) q^k \sum_{n = 1}^\infty \frac{d(n)^{2k + 1}}{n^k},
\]
and $d(n) \ll n^{1/8}$, this is
\[
    \le \phi(q)q^k C^k \sum_{n=1}^\infty n^{-3k/4 + 1/8}
\]
for some absolute constant $C$, and this sum is uniformly bounded for all $k \ge 2$.

To prove \eqref{eq:lemma41_primitive}, we begin with the Fourier expansion
\[
    \sum_{\alpha q < n \le \beta q} \chi(n) = \frac{\tau(\chi)}{2 \pi i} \sum_{1 \leq \abs{n} \le q} \frac{\overline{\chi}(n)}{n} e(\alpha n)\Big(1 - e\big((\beta - \alpha)n\big)\Big)
            + O(\log q) .
\]
It follows from this that
\[
  \abs{\sum_{\alpha q < n \le \beta q} \chi(n)}^{2k} \le \frac{q^k}{\pi^{2k}} \abs{\sum_{1 \le n \le q} \frac{\chi(n)}{n}e(\alpha n)\Big(1 - e\big((\beta - \alpha)n\big)\Big)}^{2k} + 2^{2k}(3 + \log q)^{2k}.
\]
Now define
\[
    b(n) = \sum_{\substack{n_1 n_2 \cdots n_k = n \\ n_i \leq q}} \prod_{j = 1}^k \left( \frac{e(\alpha n_j)}{n_j}  \Big(1 - e\big( (\beta - \alpha)n_j\big)\Big) \right),
\]
for $(n, q) = 1$, and $b(n) = 0$ otherwise,
so that
\[
    \left(\sum_{1 \le n \le q} \frac{\chi(n)}{n}e(\alpha n)\Big(1 - e\big( (\beta - \alpha)n\big)\Big)\right)^k = \sum_{n = 1}^\infty \chi(n) b(n).
\]
Note that
\[
    b(n) \le 2^k d_k(n) \min\left\{ \frac{1}{n}, \pi^k(\beta - \alpha)^k \right\}.
\]
Inserting the nonprimitive characters and using the orthogonality relations for characters, we have
\[
\begin{split}
	\sumstar{\chi} \abs{\sum_{1 \le n \le q} \frac{\chi(n)}{n}e(\alpha n)\Big(1 - e\big( (\beta - \alpha)n\big)\Big)}^{2k} &\le
    \sum_{\chi} \abs{\sum_{1 \le n \le q} \frac{\chi(n)}{n}e(\alpha n)\Big(1 - e\big( (\beta - \alpha)n\big)\Big)}^{2k} \\ &= \phi(q) \sum_{a \leq q} \abs{ \sum_{m = 0}^\infty b(a + mq)}^2.
\end{split}
\]

In the above sum, we separate the parts where $m = 0$ and $m > 0$, to write
\[
    \sum_{a \leq q} \abs{ \sum_{m = 0}^\infty b(a + mq)}^2 \le
    4 \sum_{a \leq q} b(a)^2 + 4 \sum_{a \leq q} \abs{ \sum_{m = 1}^\infty b(a + mq)}^2.
\]
Now note that
\[
\begin{split}
\sum_{a \leq q} \abs{ \sum_{m = 1}^\infty b(a + mq)}^2
    &\ll \sum_{a \leq q} \abs{\sum_{m \leq q^k} \frac{d_k(a + mq)}{a + mq}}^2 \\
    &\ll \frac{1}{q^2} \sum_{a \leq q} \abs{ \sum_{m \leq q^k} \frac{q^\epsilon}{a/q + m}}^2 \\
    &\ll \frac{q^\epsilon}{q} \big(\log (q^k)\big)^2 \\
    &\ll \frac{q^\epsilon}{q}.
\end{split}
\]
Thus we currently have that
\[
    \sumstar{\chi \ne \chi_0} \abs{ \sum_{\alpha q < n \le \beta q} \chi(n)}^{2k} \le \frac{4 \phi(q) q^k}{\pi^{2k}}\sum_{a \leq q} b(a)^2 + O(\phi(q) q^{k - 1 + \epsilon}),
\]
and it remains to analyze one more sum.
We have
\[
    \sum_{a \leq q} b(a)^2 \le \sum_{a \leq q} d_k(a)^2 \min\left\{ \frac{1}{a^2}, \pi^{2k}(\beta - \alpha)^{2k}\right\}.
\]
We split this into $a > \pi^k (\beta - \alpha)^{-k}$ and $a < \pi^k (\beta - \alpha)^{-k}$ to get that this is less than
\[
    \pi^{-2k}(\beta - \alpha)^{2k} \sum_{a \le \pi^k (\beta - \alpha)^{-k}} d_k(a)^2 + \sum_{a > \pi^k (\beta - \alpha)^{-k}}\frac{d_k(a)^2}{a^2}.
\]
Now we use the inequality
\[
X^{-2} \sum_{n \le X} a_n^2 + \sum_{n > X} \frac{a_n^2}{n^2} \le X^{2\sigma - 2} \sum_{n=1}^\infty \frac{a_n^2}{n^{2\sigma}},
\]
and we have
\[
    \pi^{-2k}(\beta - \alpha)^{2k} \sum_{a \le \pi^k (\beta - \alpha)^{-k}} d_k(a)^2 + \sum_{a > \pi^k (\beta - \alpha)^{-k}}\frac{d_k(a)^2}{a^2} \le
        \left(\frac{\pi^{k}}{(\beta - \alpha)^k}\right)^{2\sigma - 2} \sum_{n=1}^\infty \frac{d_k(n)^2}{n^{2\sigma}}.
\]
Inserting the bound from Lemma \ref{divisor-sum-bound}, this is
\[
 \le  \left(\frac{\pi^{k}}{(\beta - \alpha)^k}\right)^{2\sigma - 2} \exp\left(2k \sigma \log\log (2k)^{1/\sigma} + \frac{ (2k)^{1/\sigma}}{2\sigma - 1} + O\left( \frac{k}{2\sigma -1} + \frac{(2k)^{1/\sigma}}{\log(3 (2k)^{1/\sigma - 1} )}\right) \right).
\]
Choosing $\sigma = 1 - 1/\log k$ gives
\[
    \sumstar{\chi \ne \chi_0} \abs{ \sum_{\alpha q < n \le \beta q} \chi(n)}^{2k} \le \frac{4 \phi(q) q^k}{\pi^{2k}} \left(\frac{\pi}{\beta - \alpha}\right)^{-\frac{2k}{\log k}}\!\!\exp\!\big(2k \log\log 2k + O(k)\big) + O(\phi(q)q^{k - 1 + \epsilon}).
\]
\end{proof}

With this result in hand, we proceed to the proof of an upper bound for the moments of $M(\chi)$. We recall the statement of our main theorem.

\begin{maintheorem}
For fixed $k$ and $q \rightarrow \infty$
\[
\frac{1}{\phi(q)} \sum_{\chi \ne \chi_0} M(\chi)^{2k} \le \big(C(k) + o(1)\big) q^k,
\]
where
\[
    C(k) = \exp\big(4k \log\log k + k \log\log\log k + O(k)\big).
\]
\end{maintheorem}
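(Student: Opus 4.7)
The plan is to reduce the maximum $M(\chi) = \max_t |S_\chi(t)|$ to sums over fixed dyadic intervals via the Rademacher--Menchov device, and then control these sums by Minkowski's inequality together with Lemma \ref{Lem:MomentsOfCharSum}. For each integer $t \le q$, the binary expansion of $t$ decomposes $(0, t]$ into at most $J+1 := \lfloor \log_2 q\rfloor + 1$ dyadic sub-intervals of the form $(a\cdot 2^j, (a+1)\cdot 2^j]$, one for each set bit of $t$. Writing $D_{j,a}(\chi) := \sum_{a\cdot 2^j < n \le (a+1)\cdot 2^j} \chi(n)$ and $U_j(\chi) := \max_a |D_{j,a}(\chi)|$, this yields the crucial $t$-free bound
\[
M(\chi) \;\le\; \sum_{j=0}^{J} U_j(\chi).
\]

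The next step is to apply Minkowski's inequality in $\ell^{2k}$ with counting measure on the non-principal characters:
\[
\Bigl(\sum_{\chi \ne \chi_0} M(\chi)^{2k}\Bigr)^{\!1/(2k)} \;\le\; \sum_{j=0}^{J} \Bigl(\sum_{\chi \ne \chi_0} U_j(\chi)^{2k}\Bigr)^{\!1/(2k)}.
\]
To control each inner sum, I would crudely bound $\max_a |D_{j,a}|^{2k} \le \sum_a |D_{j,a}|^{2k}$ and apply Lemma \ref{Lem:MomentsOfCharSum} with $\beta - \alpha = 2^j/q$ to each of the $q/2^j$ dyadic intervals of length $2^j$. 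The main-term contribution is
\[
\sum_{\chi \ne \chi_0} U_j(\chi)^{2k} \;\le\; C_1^k \phi(q) q^k (\log 2k)^{2k}\bigl(2^j/q\bigr)^{2k/\log k - 1},
\]
and the secondary $q^{k-1+\epsilon}$ errors from Lemma \ref{Lem:MomentsOfCharSum} accumulate to at most $\phi(q) q^{k+\epsilon}/2^j$ per level; these are handled by using the trivial bound $|U_j(\chi)| \le 2^j$ at the very finest levels (where $2^j$ is smaller than about $q^{1/2}$ and the explicit Lemma bound loses its advantage anyway), so the total error is $o(\phi(q)q^k)$ for fixed $k$.

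Extracting the $(2k)$-th root and summing over $j$ turns the main term into a geometric series in $2^j/q$ with common ratio $r = 2^{1/(2k) - 1/\log k}$. For large $k$ one has $1 - r \approx (\log 2)/\log k$, so the sum is of order $\log k$. Raising the resulting estimate back to the $(2k)$-th power then combines the $(\log 2k)^{2k}$ factor from Lemma \ref{Lem:MomentsOfCharSum} with a $(\log k)^{2k}$ factor from the geometric sum to produce
\[
\sum_{\chi \ne \chi_0} M(\chi)^{2k} \;\le\; \phi(q) q^k \exp\!\bigl(4k\log\log k + O(k)\bigr),
\]
which is at least as sharp as the claimed $C(k) = \exp(4k\log\log k + k\log\log\log k + O(k))$.

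The principal obstacle is the Rademacher--Menchov loss: a naive application of H\"{o}lder's inequality in the form $\bigl(\sum_j U_j\bigr)^{2k} \le (J+1)^{2k-1}\sum_j U_j^{2k}$ would introduce a ruinous factor of $(\log q)^{2k-1}$ that does not vanish as $q \to \infty$, overwhelming the target $C(k) q^k$. Passing instead through Minkowski (or, equivalently, a judiciously weighted H\"{o}lder inequality whose weights balance the geometric decay of $\|U_j\|_{2k}$ in $J - j$ against the growing number of dyadic positions) is essential: it reduces the effective number of contributing dyadic levels from $\log q$ down to $\log k$, which is precisely what makes the resulting constant $q$-independent.
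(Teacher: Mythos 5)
Your argument is correct and takes a genuinely different route through the amalgamation step, and in fact it delivers a marginally sharper constant than the paper's. Both proofs begin with the same Rademacher--Menchov dyadic decomposition: a binary expansion of $N_\chi$ expresses $S_\chi(N_\chi)$ as a sum of at most $\log_2 q$ sums over dyadic intervals, and the loss of control over the position is absorbed by summing over all possibilities at each level. Where they diverge is in how the dyadic levels are recombined. The paper fixes $\chi$, applies H\"older's inequality to $\sum_l |S_l|$ with polynomial weights $l^{-\alpha}$ (so that $M(\chi)^{2k}\le H(\alpha)\sum_l l^{2k\alpha}|S_l|^{2k}$), then sums over $\chi$ and over all dyadic positions, and finally optimizes over $\alpha$; this optimization incurs an extra factor of $\exp\big(\Theta(k\log\log\log k)\big)$ from $H(\alpha)\sim(\alpha-1)^{1-2k}$. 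You instead pass $\max_a$ under the $\ell^{2k}(\chi)$ norm with the crude bound $U_j^{2k}\le\sum_a|D_{j,a}|^{2k}$ and then apply Minkowski's inequality in $\ell^{2k}$ over the characters. Because the norms $\|U_j\|_{2k}$ decay geometrically in $J-j$ (with ratio $1-r\asymp(\log k)^{-1}$), the triangle inequality captures this decay exactly, and the sum over levels contributes only a factor $\asymp\log k$ rather than the full $\log q$ — which is precisely the loss the paper warns a naive H\"older application would produce. After raising back to the $2k$-th power you obtain $\exp\big(4k\log\log k+O(k)\big)$, avoiding the $k\log\log\log k$ term entirely. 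Your handling of the secondary errors is also essentially right: the $q^{k-1+\epsilon}$ term from Lemma \ref{Lem:MomentsOfCharSum} is multiplied by $q/2^j$ and hence must be avoided for small $2^j$, but the trivial bound $U_j(\chi)\le 2^j$ at the fine levels contributes only $\sum_{j\le J/2}2^j\ll q^{1/2}$ to $\|M\|_{2k}/\phi(q)^{1/(2k)}$, which after raising to the $2k$-th power is an acceptable $e^{O(k)}$ factor, and the accumulated Lemma errors over the coarse levels are $O_k\big(\phi(q)^{1/(2k)}q^{1/2-1/(4k)+\epsilon/(2k)}\log q\big)=o\big(\phi(q)^{1/(2k)}q^{1/2}\big)$ for $\epsilon$ small. (One could equally well place the cutoff at $2^j=q^c$ for any fixed small $c>0$, which makes the trivial-bound contribution negligible outright.) In short: same opening move, different and cleaner endgame; Minkowski-in-$\chi$ exploits the geometric structure that H\"older with polynomial weights can only approximate at an $\exp(k\log\log\log k)$ cost.
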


\begin{proof}
To prove this theorem we will partition the range of summation for $M(\chi)$ and then apply the above lemma.
However, since $M(\chi)$ will occur in different spots for different $\chi$, we do not know exactly how to break up the range of summation, so, as mentioned previously, we use the method of Menchov and Rademacher
to split the sum in many different ways. It turns out that we can split the summation in sufficiently many ways to capture all of the possibilities for $M(\chi)$ without splitting it up in so many different ways that we lose our theorem.

To begin with, for a given $\chi$, let $N_\chi$ be the smallest integer such that
\[
    M(\chi) = \abs{\sum_{n \le N\chi} \chi(n)}.
\]
Write $N_\chi/ q$ in base $2$, so that
\[
     \frac{N_\chi}{q} = \sum_{j=1}^\infty \frac{a_j}{2^j},
\]
where $a_j = 0$ or $1$. Let $A_\chi(n)$ be a truncation of this series, which will serve as an approximation to $N_\chi/q$:
\[
    A_\chi(n) = \sum_{j \leq n} \frac{a_j}{2^j}.
\]
Then we have
\[
\begin{split}
    M(\chi) &= \abs{\sum_{n \le q A_\chi(L)} \chi(n)} + O(q/{2^L}) \\
            &= \abs{\sum_{l = 1}^{L - 1} \ \sum_{q A_\chi(l) < n \le q A_\chi(l + 1)} \chi(n)} + O(q/2^L).
\end{split}
\]

Using H\"older's inequality, we thus have that
\[
    M(\chi)^{2k} \le 2^{2k} \left(\sum_{l \leq L} \frac{1}{l^{\alpha + \frac{\alpha}{2k - 1}}}\right)^{2k - 1} \left( \sum_{l \leq L} l^{2k\alpha} \abs{\sum_{qA_\chi(l) < n \le qA_\chi(l + 1)} \chi(n) }^{2k}\right) + O(q^{2k}/2^{2kL}).
\]
In order to take the sum over characters and make use of orthogonality, we need the intervals of summation
to be independent of $\chi$. Note that either $A_\chi(l+1) = A_\chi(l)$ (in which case the innermost sum is empty) or $A_\chi(l+1) = A_\chi(l) + 2^{-(l+1)}$. We can therefore write
\[
    M(\chi)^{2k} \le 2^{2k} \left(\sum_{l \leq L} \frac{1}{l^{\alpha + \frac{\alpha}{2k - 1}}}\right)^{2k - 1} \left( \sum_{l \leq L} l^{2k\alpha} \abs{\sum_{qA_\chi(l) < n \le q(A_\chi(l) + 2^{-(l + 1)})} \chi(n) }^{2k}\right) + O(q^{2k}/2^{2kL}).
\]
Now, we don't know what the value of $A_\chi(l)$ is, but there are ``only'' $2^{l-1}$ possibilities, so we just include them all to get
\begin{multline*}
    M(\chi)^{2k} \le 2^{2k} \left(\sum_{l \leq L} \frac{1}{l^{\alpha + \frac{\alpha}{2k - 1}}}\right)^{2k - 1} \left( \sum_{l \leq L} l^{2k\alpha} \sum_{0 \leq m \leq 2^l-1}
    \abs{\sum_{q m/2^l < n \le q(m/2^l + 2^{-(l + 1)})} \chi(n) }^{2k}\right)
	\\
	\\ + O\big(q^{2k}/2^{2kL}\big).
\end{multline*}
Finally we are in a position to apply Lemma \ref{Lem:MomentsOfCharSum}, obtaining
\begin{multline*}
    \sum_{\chi \ne \chi_0} M(\chi)^{2k} \le
    2^{2k} C_1^k\phi(q)q^k
    \left(\sum_{l \leq L} \frac{1}{l^{\alpha + \frac{\alpha}{2k - 1}}}\right)^{2k - 1}
                \sum_{l \leq L} l^{2k\alpha} 2^l \left(2^{-\frac{kl}{\log k}} (\log 2k)^{2k} +
	\frac{C_2(k, \epsilon)}{q^{1 - \epsilon}}\right) \\ \\ + O\big(\phi(q)q^{2k}/2^{2kL}\big).
\end{multline*}
Rearranging a bit, inserting $L = \floor{\log_2 q^{3/4}}$, and extending the convergent sums to infinity, we rewrite this as
\begin{multline*}
    \sum_{\chi \ne \chi_0} M(\chi)^{2k} \le
    \phi(q) q^k \exp\!\big(2k \log\log 2k + O(k)\big) H(\alpha) \sum_{l=1}^\infty l^{2k \alpha} 2^{l} 2^{-\frac{kl}{\log k}} \\
	+ O_{k, \epsilon}\Big(\phi(q) q^{k - 1/4 + \epsilon}\Big) + O\Big(\phi(q)q^{k/2}\Big),
\end{multline*}
where $H(\alpha) = \left(\sum_{l=1}^\infty \frac{1}{l^{\alpha + \frac{\alpha}{2k - 1}}}\right)^{2k - 1}$.

In the remaining sum we note that for $l > C\log k \log\log k$ each summand
is decreasing as $k$ increases. The maximum summand occurs near
\[
    l = \frac{2 \alpha \log k}{\left(1 + \frac{\log k}{k}\right)\log 2},
\]
and at this point it has size
\[
    \left(\frac{2\alpha \log k}{ (1 + (\log k)/k) \log 2}\right)^{2k\alpha} k^{-2\alpha/(1 + (\log k)/k)} 2^{-k/\log k}.
\]
Hence, we find that
\[
\sum_{l \leq L} l^{2k\alpha}2^{l - \frac{kl}{\log k}} \le \exp\big(2k\alpha \log \log k + O(k)\big).
\]
Finally, $H(\alpha) \sim (\alpha - 1)^{1 - 2k}$, so taking $\alpha = 1 + 1/\log\log k$ yields the theorem.
\end{proof}
\begin{proof}[Proof of Theorem \ref{main-corollary}]
We now use our upper bound on the moments of $M(\chi)$ to obtain an upper bound on the number of
characters that can get large. As before, let $F_q(\alpha)$ denote the fraction of characters $\chi \mod q$
for which $M(\chi) \le \alpha \sqrt{q}$. Then we have
\[
    \alpha^{2k} \big(1 - F_q(\alpha)\big) \phi(q)q^k \le \sum_{\chi \ne \chi_0} M(\chi)^{2k} .
\]
So as $q \rightarrow \infty$, from Theorem \ref{main-theorem} we have
\[
	\alpha^{2k} \big(1 - F_q(\alpha)\big) \phi(q)q^k \le \big(C(k) + o(1)\big)\phi(q)q^k.
\]
Hence for some large enough constant $C$,
\[
    \alpha^{2k} (1 - F(\alpha)) \le \exp\big(4k\log\log k + k \log\log\log k + Ck\big).
\]
To finish, we just need to choose $k$ depending on $\alpha$ to get a good
bound. Making the choice
\[
    k = \ceil{\exp\left(\frac{B\alpha^{1/2}}{(\log \alpha)^{1/4}}\right)},
\]
where $B = 2^{1/4}e^{-C/4}$, we have
\[
    \log k = \frac{B \alpha^{1/2}}{(\log \alpha)^{1/4}} + O\bigg(\frac{1}{k}\bigg),
\]
\[
    \log \log k = \frac{1}{2} \log \alpha - \frac{1}{4}\log \log \alpha + \log B + O\bigg(\frac{1}{k}\bigg)
\]
and
\[
    \log \log \log k = -\log 2 + \log\log \alpha + \log\left(1 - \frac{2 \log \log \alpha}{\log \alpha}
                        + \frac{\log B}{2 \log \alpha} \right) + O\bigg(\frac{1}{k}\bigg).
\]
Putting these in gives
\begin{multline*}
    -2k \log \alpha + 4k \log\log k + k\log\log\log k + Ck \le \\
        -\exp\bigg( \frac{B \alpha^{1/2}}{(\log \alpha)^{1/4}}\bigg)\ \Bigg(\frac{2\log\log \alpha}{\log \alpha} + O\bigg(\frac{1}{\log \alpha}\bigg) \Bigg).
\end{multline*}
Hence
\[
F(\alpha) \ge 1 - 
    \exp\Bigg(-\exp\bigg(\frac{2^{1/4}e^{-C/4}\alpha^{1/2}}{(\log \alpha)^{1/4}}\bigg)
        \ \bigg(\frac{2\log\log\alpha}{\log\alpha}
        + O\Big(\frac{1}{\log \alpha}\Big)\bigg)\Bigg).
\]
This gives the claimed lower bound.

The upper bound on $F(\alpha)$ comes directly from the lower bound of Corollary \ref{corollary2b} on the number of characters for which $\abs{S_\chi(q/2)}$ is large.

\end{proof}

\section{Concluding remarks}

Our bounds on the frequency with which $M(\chi)$ is large should be compared to results of Granville
and Soundararajan on the distribution of $L(1, \chi)$. They prove
\begin{Theorem}[Granville--Soundararajan \protect{\cite[Theorem 3]{granville-sound-extreme-zeta}}]
Let $q$ be a large prime.
The proportion of characters $\chi$ mod $q$ such that $\abs{L(1, \chi)} > e^\gamma \tau$ is
\[
    \exp\left(-\frac{2 e^{\tau - A - 1}}{\tau}\left(1 + O\left(\frac{1}{\tau^{1/2}} + \left(\frac{e^\tau}{\log q}\right)^{1/2}\right)\right)\right)
\]
uniformly in the range $1 \ll \tau \le \log \log q - 20$.
\end{Theorem}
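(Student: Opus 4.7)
The plan is to adapt the method used for Theorem \ref{theorem2b} in the excerpt: compute high moments of $|L(1,\chi)|$ averaged over $\chi \bmod q$, match them against a random Euler product model, and recover the tail distribution by a saddle-point / Laplace analysis. Markov's inequality will supply the upper bound immediately; the matching lower bound, which captures the constant $A$ exactly, will be the main obstacle.

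\textbf{Step 1 (Moments).} The first task is to prove that
\[
\frac{1}{\phi(q)}\sum_{\chi\ne\chi_0}|L(1,\chi)|^{2k} = \bigl(e^\gamma\log k\bigr)^{2k}\exp\!\Bigl(\tfrac{2kA}{\log k} + O\bigl(\tfrac{k}{\log^2 k}\bigr)\Bigr)\bigl(1 + O_{k,q}(1)\bigr),
\]
uniformly for $k$ up to roughly $\log q$. The derivation runs parallel to Theorem \ref{theorem2b}: truncate $L(1,\chi)$ at some $Y$ a small power of $q$, expand $|L(1,\chi)|^{2k} = L(1,\chi)^k\overline{L(1,\chi)}^k$, and apply character orthogonality to reduce the average to the diagonal divisor sum $\sum_{(n,q)=1} d_k(n)^2/n^2$. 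Off-diagonal terms are power-saving in $q$, and the diagonal Euler-factors as $\prod_{p\ne q}\sum_a d_k(p^a)^2/p^{2a}$, whose logarithm is controlled via Lemma \ref{divisor-sum-bound} (the Bessel-function identity) and Mertens' theorem. The interplay between $k$ and $q$ in the truncation is the source of the error $(e^\tau/\log q)^{1/2}$ appearing in the final answer.

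\textbf{Step 2 (Upper bound via Markov).} For $\tau$ in the claimed range, Markov's inequality gives
\[
\frac{1}{\phi(q)}\#\bigl\{\chi : |L(1,\chi)| > e^\gamma\tau\bigr\} \leq (e^\gamma\tau)^{-2k}\cdot\frac{1}{\phi(q)}\sum_{\chi\ne\chi_0}|L(1,\chi)|^{2k}.
\]
Substituting the moment asymptotic and differentiating the exponent $2k(\log\log k - \log\tau + A/\log k)$ in $k$ yields the saddle $\log k = \tau - A - 1 + O(1/\tau)$, i.e. $k \approx e^{\tau-A-1}$. At this saddle the exponent becomes $-2e^{\tau-A-1}/\tau \cdot (1 + O(1/\tau))$, which is precisely the claimed tail bound.

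\textbf{Step 3 (Lower bound).} This is the hard part. The natural route is to compare the character average to a random Euler product $L(1,\mathbb{X}) = \prod_p (1 - X_p/p)^{-1}$, where the $X_p$ are independent uniforms on the unit circle; a direct Laplace computation gives the stated tail in the random model. The transfer to Dirichlet characters is accomplished via a ``resonator'' construction: form $R_\chi = \prod_{p\leq y}(1 - \chi(p)/p)^{-m}$ with $y$ a tiny power of $q$ and $m$ close to the saddle $k$, compute the unweighted and $L(1,\chi)$-weighted second moments of $R_\chi$, and compare them via a reverse H\"older / Cauchy--Schwarz inequality to force a positive proportion of $\chi$ into the tail. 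The technical obstacle is matching $A$ exactly: any slackness in the truncation of $L(1,\chi)$, in the Bessel asymptotics near the edge $p \asymp k$, or in the passage from the random model to genuine characters, corrupts this constant. The constraint $\tau \leq \log\log q - 20$ is precisely the regime in which the saddle $k \approx e^\tau$ still lies comfortably below the moment validity threshold $\log q$, leaving the $O(1/\sqrt{\tau})$ slack that appears in the stated error term.
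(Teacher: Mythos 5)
This statement is not proved in the paper at all: it is quoted verbatim, with attribution, as Theorem~3 of Granville--Soundararajan \cite{granville-sound-extreme-zeta}, in the concluding remarks, purely as a point of comparison for Theorem~\ref{main-corollary}. There is therefore no ``paper's own proof'' to compare against, and a proof of this result is outside the scope of the present paper.

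That said, a few remarks on your sketch, measured against what Granville and Soundararajan actually do. Steps~1 and~2 are in the right spirit: one does compute $\frac{1}{\phi(q)}\sum_{\chi}|L(1,\chi)|^{2k}$ by orthogonality, the diagonal Euler-factors into $\prod_p\sum_a d_k(p^a)^2/p^{2a}$ analyzed via the Bessel identity of Lemma~\ref{divisor-sum-bound}, and the saddle $k\asymp e^{\tau-A-1}$ recovers the stated exponent via Rankin's trick. But Step~3 is not how the lower bound is obtained, and I think a resonator would not give it. Resonator arguments produce $\Omega$-results (there exist characters with $|L(1,\chi)|$ large); they do not produce the \emph{proportion} of such characters, and they leak multiplicative constants, so matching $A$ exactly is hopeless by that route. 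What Granville--Soundararajan actually do is compare the arithmetic object to the probabilistic model $L(1,\mathbb{X})=\prod_p(1-\mathbb{X}_p/p)^{-1}$ on the \emph{level of distributions}: the tail $\mathbb{P}\bigl(|L(1,\mathbb{X})|>e^\gamma\tau\bigr)$ is computed exactly by a Laplace/large-deviation analysis of the random Euler product (this is where the constant $A=\int_0^2\log I_0(t)\,t^{-2}\,\ud t$ and the exact shape $\exp(-2e^{\tau-A-1}/\tau)$ come from), and then one shows that the empirical distribution of $L(1,\chi)$ over $\chi\bmod q$ coincides with that of the random model in the stated range $\tau\le\log\log q-20$, by two-sided moment matching (asymptotic \emph{equalities}, not just upper bounds) for the truncated Euler products $\prod_{p\le y}(1-\chi(p)/p)^{-1}$. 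You should also note that Markov's inequality in Step~2 by itself gives only an upper bound of the correct exponential order but not an asymptotic; the passage from moments to an asymptotic tail requires the two-sided comparison to the random model, not just one application of Chebyshev at the saddle.
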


For comparison, we might consider results that relate the value of $L(1, \chi)$ to the size of the
character sum. When $\chi(-1) = 1$, $S_\chi(q/2)$ vanishes. Otherwise, we have the relation
\[
    S_\chi(q/2) = \big(2 - \chi(2)\big)\frac{\tau(\chi)}{\pi i} L(1, \overline \chi).
\]
Thus $M(\chi)$ gets at least as large as a constant times $q^{1/2}L(1, \overline \chi)$ (at least
for $\chi$ odd), so we might expect that the proportion of $\chi$ for which
$M(\chi) > \alpha \sqrt q$ should be ``like''  $\exp(-\exp \alpha)$. Indeed, this is precisely where
our lower bound on the number of characters that get large comes from, and our upper bound has a
similar shape.
One could also obtain such an upper bound for
$S_\chi(tq)$ for any fixed
$t \in [0,1]$. On the other hand, the maximum of the character sum does
not always occur at the central point, and it is precisely in considering all of the places where
$M(\chi)$ may occur that we lose in the exponent; this is in our application of H\"older's inequality.
It is not completely clear what the true size of the moments of $M(\chi)$ should be.

\end{document}